\author{Benjamin Jaye and Rahul Sethi}
\title[A High-Frequency Uncertainty Principle for the Fourier-Bessel Transform]{A High-Frequency Uncertainty Principle for\\ the Fourier-Bessel Transform}
\address{School of Mathematics\\ Georgia Institute of Technology \\ Atlanta, GA, USA} \email{bjaye3@gatech.edu}
\email{rahul.sethi@math.gatech.edu}
\date{\today}
\keywords{Uncertainty Principle, Fourier-Bessel Transform, Logvinenko-Sereda Theorem}
\newcommand{\supp}{\operatorname{supp}}
\newcommand{\C}{\mathbb C}
\newcommand{\N}{\mathbb N}
\newcommand{\F}{\mathcal{F}}
\newcommand{\ep}{\varepsilon}
\newcommand{\R}{\mathbb R}
\newcommand{\wh}{\widehat}
\renewcommand{\ge}{\geqslant}
\renewcommand{\le}{\leqslant}
\newcommand{\inv}{^{-1}}
\newcommand{\citepre}[2]{%
  \begingroup
    \renewcommand\@cite[2]{[#1, ##1]}%
    \cite{#2}
  \endgroup
}
\numberwithin{equation}{section}
\theoremstyle{plain}
\newtheorem{theorem}[equation]{Theorem}
\newtheorem*{"thm"}{"Theorem"}
\newtheorem{lemma}[equation]{Lemma}
\newtheorem{proposition}[equation]{Proposition}
\newtheorem{question}{Question}
\theoremstyle{definition}
\theoremstyle{remark}
\newcommand{\nref}[1]{(\hyperref[#1]{#1})}
\begin{document}

\begin{abstract}

Motivated by problems in control theory concerning decay rates for the damped wave equation 
$$w_{tt}(x,t) + \gamma(x) w_t(x,t) + (-\Delta + 1)^{s/2} w(x,t) = 0,$$
we consider an analogue of the classical Paneah-Logvinenko-Sereda theorem for the Fourier Bessel transform. In particular, if $E \subset \mathbb{R}^+$ is $\mu_\alpha$-relatively dense (where $d\mu_\alpha(x) \approx x^{2\alpha+1}\, dx$) for $\alpha > -1/2$, and $\operatorname{supp} \mathcal{F}_\alpha(f) \subset [R,R+1]$, then we show 
$$\|f\|_{L^2_\alpha(\mathbb{R}^+)} \lesssim \|f\|_{L^2_\alpha(E)},$$
for all $f\in L^2_\alpha(\mathbb{R}^+)$, where the constants in $\lesssim$ do not depend on $R > 0$.

Previous results on PLS theorems for the Fourier-Bessel transform by Ghobber and Jaming (2012) provide bounds that depend on $R$. In contrast, our techniques yield bounds that are independent of $R$, offering a new perspective on such results. This result is applied to derive decay rates of radial solutions of the damped wave equation.  

 \end{abstract}

\maketitle


\section{Introduction}
The motivation for this paper centers around the following basic question:

\begin{question}\label{genann} Suppose that $R>1$ and $f\in L^2(\R^n)$ has its Fourier transform supported in the width-one annulus $A_R = \{\xi\in \R^n:|\xi|\in [R, R+1]\}$.  For which sets $E\subset \R^n$ does it hold that
\begin{equation}
\label{pls-independent}
    \|f\|_{L^2(\R^n)}\leq C\|f\|_{L^2(E)}
\end{equation}
where $C>0$ is independent of $f$ and $R$?
\end{question}

Since the width-one annulus contains a strip of length approximately $\sqrt{R}$ and width $1/2$ in any direction, one can show (see \cite{green-jaye-mitkovski}) that a necessary condition for $E$ is that it satisfies the \emph{geometric control condition} (GCC):  there exists $L>0$ and $\theta>0$ such that for any line segment $\ell\subset \R^n$ of length $L$, we have $m(E\cap \ell)\geq \theta\cdot L$. Here $m$ is the $1$-dimensional Lebesgue measure on the segment $\ell$.  The principal question is whether the GCC is sufficient:

\begin{question}\label{gccques} Suppose that $E$ satisfies the GCC.  Is it true that there exists $C>0$ such that for every $R>1$ and $f\in L^2(\R^n)$ with $\supp(\wh{f})\subset A_R$ we have
$$\|f\|_{L^2(\R^n)}\leq C\|f\|_{L^2(E)}?
$$ \end{question}

The motivation for considering this problem comes from a problem in control theory considered by Burq-Joly \cite{burq16}.  For a non-negative function $\gamma$ we consider the damped wave equation: For $(x,t) \in \R^n \times [0,\infty)]$, let $w$ satisfy
 	\begin{equation}\label{eq:1} w_{tt}(x,t) +\gamma(x)w_t(x,t) + \Delta w(x,t)=0.\end{equation}
The damping force is represented by $\gamma w_t$, and the energy of the solution $w$ is given by $E(t) = \|w\|_{H^1(\R^n)}+\|\partial_t w\|_{L^2(\R^n)}$.
Standard analysis shows that if $\gamma=0$, then the energy is conserved, i.e., there is no decay. On the other hand, for constant damping $\gamma=c >0$, it can be shown that $E(t)$ decays exponentially in $t$.  A classical question is to understand for which functions $\gamma$ does $E(t)$ decay exponentially. In \cite{burq16} it is proved for \emph{uniformly continuous} $\gamma$, that the energy decays exponentially if there exists $\ep>0$ such that the level set $\{\gamma>\ep\}$ satisfies the GCC.  They asked whether one can drop the assumption of uniform continuity, which would follow from Question \ref{gccques}.  This application was the motivation for Walton Green, Mishko Mitkovski, and the first author to formulate Question \ref{gccques} in \cite{green-jaye-mitkovski}, where it was proved that the answer to Question \ref{gccques} is affirmative if one replaces $E$ by its $\delta$-neighborhood for any $\delta>0$ (with a constant that blows up as $\delta\to 0^+$, see \cite{green-jaye-mitkovski}).  Analogues of the GCC in compact manifolds have long played a role in  control theory for hyperbolic equations, for instance see Bardos, Lebeau, Rauch \cite{b-l-r, zworski-book}, and various uncertainity principles have played an essential role in control theory \cite{bourgain18,jaffard01,zworski-book,suzuki-inami,suzuki-damping}.

In this paper we consider the radial case of Question \ref{gccques} and show that it has a positive answer in this case, even with a considerably weaker condition than the GCC. 

For $\alpha > -1/2$, the Fourier-Bessel transform is defined by $$\mathcal F_\alpha(f)(y) = \int_0^\infty f(x) j_\alpha(2\pi xy)\, d\mu_\alpha(x),$$
      where $j_\alpha$ is the modified Bessel function given by $$j_\alpha(x) = \Gamma(\alpha + 1) \sum_{n=0}^\infty \frac{(-1)^n}{n!\, \Gamma(n + \alpha + 1)} \left(\frac{x}{2}\right)^{2n} = \frac{\Gamma(\alpha + 1)}{\Gamma\left(\alpha + \frac12\right) \Gamma\left(\frac12\right)} \int_{-1}^1 (1-x^2)^{\alpha - 1/2}\, \cos(sx)\, dx,$$
      and
      $$d\mu_\alpha(x) = \frac{2\pi^{\alpha + 1}}{\Gamma(\alpha + 1)}\, x^{2\alpha+1}\, dx.$$

The Fourier-Bessel transform occurs naturally when one studies the Fourier transforms of radial functions. If $f: \R^d \to \R$ is a radial function, i.e., $f(x) = F(|x|)$, then a standard calculation shows that $\widehat{f}(\xi) = \F_\alpha (F)(|\xi|)$ for $\alpha = \frac{d}{2} - 1$.
      
      For $1\leqslant p < \infty$, $L^p_\alpha(\R^+)$ consists of the measurable functions $f:\R^+ \to \R$ such that
      $$\|f\|_{L^p_\alpha} := \Bigl(\,\int_0^\infty |f(x)|^p\, d\mu_\alpha(x) \Bigl)^{1/p}<\infty.$$
      The Fourier-Bessel transform extends to an isometry on $L^2_\alpha(\R^+)$, i.e., $\|\mathcal{F}_\alpha(f)\|_{L^2_\alpha} = \|f\|_{L^2_\alpha}$.

With this notation, our main result reads as follows:

      \begin{tcolorbox}[colback=SeaGreen!10, colframe=SeaGreen!80, boxrule=0.5mm, left=2mm, right=2mm, boxsep=1mm, arc=1mm.]
\begin{theorem}
\label{thm1}
    Suppose $E\subset\R^+$ is relatively dense with respect to $\mu_{\alpha}$, meaning that
$$\text{there is }\gamma>0 \text{ such that }\mu_\alpha(E\cap [r,r+1]) \ge \gamma \mu_\alpha([r,r+1]) \text{ for all }r\ge 0.$$ 
There exists $C>0$ such that for all sufficiently large $R > 0$, if $\supp \F_\alpha(f) \subset [R,R+1]$, then 
$$\|f\|_{L^2_\alpha(\R^+)} \le C\|f\|_{L^2_\alpha(E)},$$
where $C$ depends on $\alpha$ and $\gamma$ but not on $R$.
\end{theorem}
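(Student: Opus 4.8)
The plan is to reduce the Fourier-Bessel statement, after a large-$R$ rescaling, to a perturbation of the Euclidean Paneah-Logvinenko-Sereda theorem for functions with Fourier transform in a short strip. The key analytic input is the asymptotic expansion of the Bessel function $j_\alpha$: for large argument one has $j_\alpha(x) = c_\alpha x^{-\alpha - 1/2}\cos(x - \tfrac{\pi}{2}\alpha - \tfrac{\pi}{4}) + O(x^{-\alpha-3/2})$. Thus on the annulus $[R, R+1]$, after substituting $f(x) = x^{-\alpha-1/2} g(x)$ the kernel $j_\alpha(2\pi x y)$ behaves, up to a controlled multiplicative weight and an error term, like the one-dimensional cosine kernel $\cos(2\pi xy)$. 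So the Fourier-Bessel transform of such an $f$ restricted to $y \in [R, R+1]$ looks, modulo a lower-order operator, like the restriction to $[R, R+1]$ of the one-dimensional Fourier (cosine) transform of $g$ — which is a band-limited function on a strip of length one. This suggests the following steps.

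First I would set up the substitution carefully: write $f(x) = x^{-\alpha - 1/2}g(x)$, so that $\|f\|_{L^2_\alpha}^2 \approx \int_0^\infty |g(x)|^2\, dx$ up to the constant weight in $d\mu_\alpha$, modulo the region $x$ small, which must be treated separately since the asymptotics of $j_\alpha$ only hold for large argument. Here the hypothesis that $R$ is large is essential: since $\supp \F_\alpha(f) \subset [R, R+1]$ and $R$ is large, the oscillation of $\F_\alpha(f)$ forces $f$ itself to have most of its mass at scales $x \gtrsim 1$ (one can quantify this by a separate, cheap estimate using the explicit form of $j_\alpha$ near the origin, or by a Bernstein-type argument), so the contribution of small $x$ to $\|f\|_{L^2_\alpha}$ is negligible. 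On the region $x \gtrsim 1$, I would invoke the Euclidean result: since $E$ is $\mu_\alpha$-relatively dense and $d\mu_\alpha \approx x^{2\alpha+1}\,dx$, the set $E \cap [1,\infty)$ has positive relative Lebesgue density in each unit interval, hence $E$ satisfies the hypotheses of the classical Paneah-Logvinenko-Sereda theorem for functions with spectrum in an interval of length comparable to $1$. The crucial point is that the PLS constant for a band of fixed length (here length $1$, since the strip $[R,R+1]$ has length one) does not depend on the location $R$ of the band — this is exactly the translation-invariance that is unavailable directly for $\F_\alpha$ but is recovered after the asymptotic reduction.

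The main obstacle, which is where I expect the real work to be, is controlling the error term in the Bessel asymptotics and the fact that $g$ is not exactly band-limited after the substitution. Even though $\F_\alpha(f)$ is supported in $[R,R+1]$, the function $g(x) = x^{\alpha+1/2}f(x)$ is only "approximately" the inverse cosine transform of something supported there — the weight $x^{\alpha+1/2}$ and the $O(x^{-\alpha-3/2})$ error in $j_\alpha$ spread the spectrum slightly. The strategy to handle this is to treat $\F_\alpha$ as a main term (the cosine transform, suitably weighted) plus a remainder operator $T_R$, and to show $\|T_R f\|$ is small relative to $\|f\|$ uniformly in $R$; the gain comes from the extra power of $x^{-1}$ in the remainder kernel combined with the localization $f \approx$ (mass at scales $\lesssim R$, say, by a support/uncertainty argument on the spectral side). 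Then one applies the Euclidean PLS theorem to the main term on $E \cap [1,\infty)$, uses that $T_R$ is a small perturbation to absorb its contribution (this requires the Euclidean constant, and hence the perturbation threshold, to be uniform in $R$, which it is), and finally cleans up the small-$x$ region and the constant-weight discrepancies. Assembling these pieces yields $\|f\|_{L^2_\alpha(\R^+)} \le C \|f\|_{L^2_\alpha(E)}$ with $C = C(\alpha, \gamma)$ independent of $R$.
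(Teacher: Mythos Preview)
Your overall strategy matches the paper's: use the asymptotic $j_\alpha(t) = A_\alpha t^{-\alpha-1/2}\cos(t-\delta) + O(t^{-\alpha-3/2})$ to reduce to a Euclidean PLS statement, handle the region near the origin and the $O$-term as small perturbations, and absorb them for $R$ large. However, there is a genuine gap in the reduction step.

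After the asymptotic substitution, the function you must control is essentially
\[
g(t) = \int_0^1 h(s)\cos\bigl(2\pi(R+s)t - \delta\bigr)\,ds, \qquad h(s) = \F_\alpha(f)(R+s)\,(R+s)^{\alpha+1/2}.
\]
Expanding the cosine gives $g(t) = \tfrac{1}{2}e^{i(2\pi Rt-\delta)}H(t) + \tfrac{1}{2}e^{-i(2\pi Rt-\delta)}\overline{H(t)}$ with $\supp\widehat{H}\subset[0,1]$. Thus $\widehat{g}$ is supported in the \emph{two} intervals $[R,R+1]\cup[-(R+1),-R]$, not in a single interval of length~$1$. The single-interval PLS theorem applied to $H$ yields $\|H\|_{L^2}\lesssim\|H\|_{L^2(E)}$, but this does not give $\|g\|_{L^2}\lesssim\|g\|_{L^2(E)}$: pointwise $|g(t)|\le |H(t)|$, so the inequality on $E$ goes the wrong way. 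What is actually required is Kovrizhkin's multi-interval PLS theorem (here with $N=2$), whose constant depends only on $N$, the density $\gamma$, and the common length $b=1$, not on the positions of the intervals. This is precisely what the paper invokes.

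A secondary remark: the paper works on the spectral side, defining $h$ directly as above, rather than via your physical-side substitution $f(x)=x^{-\alpha-1/2}g(x)$. This sidesteps your concern about the weight $x^{\alpha+1/2}$ spreading the spectrum: there is no spreading because $h$ is \emph{defined} to have support in $[0,1]$. The error control is then transparent --- the $O(t^{-\alpha-3/2})$ term contributes $\lesssim(\epsilon R^2)^{-1}\|f\|_{L^2_\alpha}^2$ on $[\epsilon,\infty)$, the region $[0,\epsilon]$ contributes $\lesssim\epsilon\|f\|_{L^2_\alpha}^2$, and taking $\epsilon=1/R$ closes the argument for $R$ large.
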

\end{tcolorbox}

If $E\subset \R^d $ satisfies the GCC, then the spherical average of $E$  
$$g(r) = \int_{\mathbb{S}^{d-1}}\chi_E(r\omega)d\omega$$
has a superlevel set that is relatively dense\footnote{In fact much stronger conditions are a consequence of the GCC:  The function $g(r)$ has a superlevel set $F$ with the property that there is a constant such that $\mu_{\alpha}(F\cap(x, x+C\min(1,1/x))\gtrsim \mu_{\alpha}(x,x+C\min(1,1/x))$ for all $x\geq 0$.  This condition means that $F$ is ``thick" (it is the complement of a thin set in the sense of \cite{shubin98}).  It was not initially clear to us whether the `thickness' would be required to prove Theorem \ref{thm1} below and it was somewhat surprising that relative density was sufficient.}  with respect to $\mu_{\alpha}$, with $\alpha = \frac{d}{2}-1$.  Therefore, a corollary of Theorem \ref{thm1} is that Question \ref{gccques} has a positive solution for radial functions. 

Theorem \ref{thm1} is reminiscent of the Paneah-Logvinenko-Sereda uncertainty principle \cite{havin12, muscschlag,logvinenko74,paneah61}.  Ghobber and Jaming \cite{gj} proved an analogue of the Paneah-Logvinenko-Sereda uncertainty principle for functions whose Fourier-Bessel support is contained in an interval $[0,R]$ for some $R > 0$, with the same assumptions on $E$. Modifying some of their notation, we state their result below. Combining \Cref{thm-gj} with our result, we note that \Cref{thm1} holds for all $R >0$ when $\alpha \ge 0$. 

\begin{theorem}
\label{thm-gj}
    Let $\alpha \ge 0$ and let $R, \gamma > 0$. Let $f\in L^2_\alpha(\R^+)$ such that $\operatorname{supp} \F_\alpha (f) \subset [0,R]$. If $E$ is a $\gamma$-relatively dense set (with constant $\gamma>0$), then 
    $$\|f\|^2_{L^2_\alpha(\R^+)} \le \frac{3}{2} \left(\frac{300 \cdot 9^\alpha}{\gamma} \right)^{\frac{160\sqrt{3} \pi}{2\ln 2} R + \alpha \frac{\ln 3}{\ln 2} + 1}\, \|f\|^2_{L^2_\alpha(E).}$$
\end{theorem}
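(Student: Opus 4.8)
The plan is to reduce the statement to a genuinely one-dimensional Paneah--Logvinenko--Sereda (PLS) inequality for band-limited functions on the line, after accounting for the Bessel weight. The starting point is the classical integral representation of the Fourier--Bessel kernel: writing $c_\alpha = \Gamma(\alpha+1)\big/\big(\Gamma(\alpha+\tfrac12)\Gamma(\tfrac12)\big)$, we have $j_\alpha(s) = c_\alpha \int_{-1}^1 (1-u^2)^{\alpha-1/2}\cos(su)\,du$, valid for $\alpha \ge 0$. Feeding this into the definition of $\F_\alpha$ and Fubini's theorem, one expresses $f(x)$, for $f$ with $\supp\F_\alpha(f)\subset[0,R]$, as a superposition over $u\in[-1,1]$ of the classical cosine-transform (hence Fourier) inverses of $x^{2\alpha+1}$-weighted data supported in frequency on $[-R,R]$. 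The upshot is that $f$ restricted to any interval is controlled by the values of a family of honestly band-limited (Paley--Wiener) functions of exponential type $2\pi R$; the weight $x^{2\alpha+1}$ and the factor $(1-u^2)^{\alpha-1/2}$ are harmless because they only contribute the $9^\alpha$-type and $R$-independent prefactors once one tracks them, and a power-of-$3$ appears from a dyadic comparison of $\mu_\alpha([0,2t])$ with $\mu_\alpha([0,t])$, which is exactly the origin of the $\alpha\frac{\ln 3}{\ln 2}$ exponent.

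The core quantitative input is the Turán--Nazarov--Remez--type inequality for exponential sums / band-limited functions: if $g$ is band-limited to $[-R,R]$ (exponential type $2\pi R$) and $E\cap[r,r+1]$ has measure at least $\gamma$, then $\int_{r}^{r+1}|g|^2 \le \big(C/\gamma\big)^{c R + 1}\int_{E\cap[r,r+1]}|g|^2$, with $C,c$ absolute; the exponent $\frac{160\sqrt3\,\pi}{2\ln 2}$ in the statement is precisely the constant one gets from the sharp Remez/Nazarov constants for exponential type $2\pi R$ on a unit interval with relative density $\gamma$. The next step is to \emph{localise}: partition $\R^+$ into the unit intervals $I_r = [r,r+1]$, $r\in\N_0$, apply the one-dimensional PLS estimate on each $I_r$ to the relevant band-limited representatives, and then re-sum. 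Summing is where the Bessel weight must be handled with care: on $I_r$ the weight $\mu_\alpha$ is comparable to $r^{2\alpha+1}$ up to the dyadic factor, so the per-interval constant is uniform in $r$ (it depends on $\gamma$ and $\alpha$ only), and summing the localised inequalities reproduces $\|f\|_{L^2_\alpha}^2 \lesssim_{\alpha,\gamma,R} \|f\|_{L^2_\alpha(E)}^2$ with the claimed explicit constant.

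I expect the main obstacle to be making the reduction to band-limited functions fully rigorous while keeping \emph{all} constants explicit and $\alpha$-dependence tracked precisely (the theorem is stated with sharp-looking numerical constants, not just $\lesssim$). Concretely: (i) justifying the Fubini interchange and the fact that the inner cosine transforms are genuinely Paley--Wiener of the stated type, rather than merely tempered distributions, requires an approximation/density argument (e.g. first for $\F_\alpha(f)$ smooth and compactly supported in $(0,R]$, then pass to the limit); (ii) the comparison between $\mu_\alpha$-relative density of $E$ and \emph{Lebesgue} relative density of $E$ on each $I_r$ must be done carefully — since $\mu_\alpha$ has a density comparable to a constant on each $I_r$ (again up to the dyadic factor of at most $3^\alpha$ per interval, or $9^\alpha$ after a crude bound), $\mu_\alpha$-relative density $\gamma$ implies Lebesgue relative density $\gtrsim \gamma/9^\alpha$ on $I_r$, which is exactly what is needed and explains the $300\cdot 9^\alpha/\gamma$ base; (iii) combining the $u$-integration (an $L^1([-1,1],(1-u^2)^{\alpha-1/2}du)$-average, which is a probability measure after normalisation) with the $L^2$ PLS inequality needs Minkowski's integral inequality, and one must check this does not inflate the exponent. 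Once these points are in place, the proof is a bookkeeping exercise combining the classical Remez-type estimate with the weighted re-summation.
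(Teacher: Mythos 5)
First, a point of orientation: the paper does not prove Theorem \ref{thm-gj} at all --- it is quoted from Ghobber--Jaming, and the only indication given of its proof is the remark that it ``hinges on the fact that Bessel functions are entire of exponential type.'' So the comparison can only be with that route, and your proposal, while starting from the same Poisson representation of $j_\alpha$, diverges from it in a way that opens a genuine gap. The flaw is in the recombination step (your point (iii)). You decompose $f$ as a superposition over $u\in[-1,1]$ of band-limited pieces $g_u$ and propose to apply the one-dimensional PLS/Remez estimate to each piece and then re-assemble via Minkowski. Minkowski only gives you
\begin{equation*}
\|f\|_{L^2(I_r)}\;\le\;\int_{-1}^{1}\|g_u\|_{L^2(I_r)}\,du\;\le\;C\int_{-1}^{1}\|g_u\|_{L^2(E\cap I_r)}\,du,
\end{equation*}
and there is no inequality in the needed direction converting $\int_{-1}^{1}\|g_u\|_{L^2(E\cap I_r)}\,du$ back into $\|f\|_{L^2(E\cap I_r)}$: the pieces may cancel on $E$ while reinforcing off $E$, and the right-hand side of the theorem only sees the values of $f$ itself on $E$. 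Applying an uncertainty principle piecewise to a decomposition and summing is structurally unable to produce a lower bound for $\|f\|_{L^2_\alpha(E)}$; this is not a matter of tracking constants more carefully.

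The repair is to use the Poisson representation for what it actually gives: $|j_\alpha(z)|\lesssim_\alpha e^{|\Im z|}$ for complex $z$, so that $f(x)=\int_0^R \F_\alpha(f)(y)\,j_\alpha(2\pi xy)\,d\mu_\alpha(y)$ extends, as a single function, to an entire function of exponential type $2\pi R$. Ghobber--Jaming then run a Kovrijkine-style argument directly on $f$: a Bernstein-type inequality in $L^2_\alpha$ for such functions, a Remez/Nazarov-type local estimate on ``good'' unit intervals where the local derivative bounds hold, and a summation over intervals in which the comparability of $\mu_\alpha$ to $r^{2\alpha+1}$ on $[r,r+1]$ produces the $\alpha$-dependent factors. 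Your bookkeeping observations (ii) --- the equivalence of $\mu_\alpha$-relative density and Lebesgue relative density, and the dyadic weight comparison responsible for the $9^\alpha$-type factors --- are sound and indeed appear in the paper as Proposition \ref{relative-density-equivalence}, but they cannot substitute for working with $f$ itself as an entire function of exponential type; the claim that the exponent $\tfrac{160\sqrt{3}\,\pi}{2\ln 2}$ drops out of ``the sharp Remez/Nazarov constants'' is likewise asserted rather than derived. As written, the proposal does not yield the theorem.
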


Observe that the constant in Theorem \ref{thm-gj} grows exponentially with the parameter $R$ (and necessarily so). Consequently, a naïve application of Theorem \ref{thm-gj} to the interval $[0,R+1]$ to treat the case $\operatorname{supp} \F_\alpha (f) \subset [R,R+1]$ would yield estimates that deteriorate as $R$ grows. On the other hand, we obtain estimates independent of $R$. While Theorem \ref{thm-gj}'s proof hinges on the fact that Bessel functions are entire of exponential type, our key ingredient is that these same Bessel functions are well-approximated by trigonometric polynomials.

We now describe briefly how Theorem \ref{thm1} applies to understanding energy decay rate for the fractional damped wave equation (\ref{damped-wave-eqn}) introduced by Malhi and Stanislavova \cite{malhi2020energy}. Fix $s > 0$ and a damping function $\gamma: \R^d \to [0, \infty)$. For every $(x,t) \in \R^d \times [0, \infty)$, let $w$ satisfy
\begin{equation}
    \label{damped-wave-eqn}
    w_{tt}(x,t) + \gamma(x) w_t(x,t) + (-\Delta + 1)^{s/2} w(x,t) = 0.
\end{equation}
      The damping force is represented by $\gamma w_t$ and the fractional Laplacian is defined, for $r \in \R$, by 
      $$(-\Delta + 1)^r f(x) := \int_{\R^d} (|\xi|^2 + 1)^r\, \widehat{f}(\xi)\, e^{2\pi ix\xi}\, d\xi.$$
      We study the decay rate of the energy $E(t)$, defined by
      $$E(t) := \|(w(t), w_t(t))\|_{H^{s/2} \times L^2} = \left(\int_{\R^d} |(-\Delta + 1)^{s/4}\, w(x,t)|^2 + |w_t(x,t)|^2 \,dx \right)^{1/2}.$$

As in \cite{malhi2020energy, green2019decay, green-jaye-mitkovski}, we can derive from Theorem \ref{thm1} a resolvant estimate from which semigroup theory \cite{gearhart78, pruss84, borichev10} yields the following theorem, which in particular answers Burq and Joly's question about the removal of continuity of the damping function positively in the radial case. Their question remains open in the general case. 

\begin{theorem}
    \label{thm-damped}
     Let \(d\ge1\), \(s>0\), and \(\alpha=\tfrac d2-1\), and let \(\gamma:[0,\infty)\to[0,\infty)\) be a bounded radial damping function.  Suppose there exist constants \(c_{0}, \gamma_1 >0\), and \(L>0\) such that the set \(\,E=\{\,r\ge0:\gamma(r)\ge c_{0}\}\) 
satisfies the density condition
\begin{equation}
\label{rdensity}
    \mu_{\alpha}(E\cap[R,R+L])\ge\gamma_{1}\,\mu_{\alpha}([R,R+L]),
\end{equation}
for every \(R\ge0\). Then, any radial solution \(u(r,t)\) of 
\(u_{tt}+(-\Delta+1)^{s/2}\, u+\gamma(r)\,u_t=0\) 
in \(\R^{d}\), with initial data 
\((u(\cdot,0),u_t(\cdot,0))\in H^{s}_{\mathrm{rad}}(\R^{d})\times H^{s/2}_{\mathrm{rad}}(\R^{d})\) 
and finite energy \(E(0)\), satisfies
\[
E(t)\;\le\;
\begin{cases}
C\,(1+t)^{-\frac{s}{4-2s}}\;\bigl\|\,u(0),u_t(0)\bigr\|_{H^{s}\times H^{s/2}},
&0<s<2,\\[0.5em]
C\,e^{-\omega\,t}\;E(0),
&s\ge2,
\end{cases}
\]
for all \(t\ge0\), where \(C,\omega>0\) depend only on \(d,s,c_{0},\gamma_{1}\).
\end{theorem}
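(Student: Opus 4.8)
The plan is to reduce the energy-decay statement to a resolvent estimate on the imaginary axis and then feed the resolvent bound into the quantified Gearhart–Prüss / Borichev–Tomilov machinery, exactly as in \cite{malhi2020energy, green2019decay, green-jaye-mitkovski}. First I would rewrite the radial equation $u_{tt} + (-\Delta+1)^{s/2}u + \gamma u_t = 0$ as a first-order system $\partial_t U = \mathcal{A}U$ on the Hilbert space $\mathcal{H} = H^{s/2}_{\mathrm{rad}} \times L^2_{\mathrm{rad}}$, with $U = (u, u_t)$ and $\mathcal{A} = \begin{pmatrix} 0 & I \\ -(-\Delta+1)^{s/2} & -\gamma \end{pmatrix}$. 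A standard computation shows $\mathcal{A}$ generates a contraction semigroup (the energy is nonincreasing because $\frac{d}{dt}E(t)^2 = -2\int \gamma |u_t|^2 \le 0$), and $0 \in \rho(\mathcal{A})$ since $\gamma \not\equiv 0$ on a relatively dense set. The task is then to bound $\|(i\beta - \mathcal{A})^{-1}\|_{\mathcal{H}\to\mathcal{H}}$ as $\beta \to \infty$.

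The heart of the matter is the resolvent estimate, and this is where Theorem \ref{thm1} enters. Given $F = (f_1, f_2) \in \mathcal{H}$ and solving $(i\beta - \mathcal{A})U = F$ for $U = (u,v)$, one gets $v = i\beta u - f_1$ and the scalar equation $-\beta^2 u + (-\Delta+1)^{s/2}u + i\beta\gamma u = f_2 + i\beta f_1 + \gamma f_1$. Pairing with $u$ and taking real and imaginary parts yields, after the usual manipulations, that control of $\|u\|$ reduces to controlling $\|u\|_{L^2}$ in terms of $\int \gamma |u|^2 = \|\sqrt{\gamma}\,u\|_{L^2}^2 \gtrsim c_0 \|u\|_{L^2_\alpha(E)}^2$, \emph{provided} the Fourier–Bessel transform of (the radial profile of) $u$ is essentially concentrated on a frequency band where $(\,|\xi|^2+1)^{s/2} \approx \beta^2$, i.e. $|\xi| \approx \beta^{2/s}$. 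A Littlewood–Paley / spectral-projection argument splits $u$ into the part with $\F_\alpha$-support in a width-$O(1)$ band around $\beta^{2/s}$ (after rescaling $\xi \mapsto \lambda \xi$ so the band has width one; note $d\mu_\alpha$ is scale-covariant, $d\mu_\alpha(\lambda x) = \lambda^{2\alpha+2}d\mu_\alpha(x)$, so the relative-density hypothesis \eqref{rdensity} is preserved under rescaling — this is why $L$ is allowed to be arbitrary in the hypothesis) and a complementary part on which $(\,|\xi|^2+1)^{s/2} - \beta^2$ is elliptic and contributes a favorable term. Applying Theorem \ref{thm1} to the band-limited part gives $\|u\|_{L^2_\alpha(\R^+)} \lesssim \|u\|_{L^2_\alpha(E)}$ with a constant independent of $\beta$, and assembling the pieces produces a resolvent bound of the form $\|(i\beta-\mathcal{A})^{-1}\| \lesssim \beta^{\,a(s)}$ for an explicit exponent $a(s)$ (one expects $a(s) = \tfrac{4}{s}-2$ for $0<s<2$ and $a(s)=O(1)$, in fact bounded, for $s\ge 2$, matching the dichotomy in the statement).

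Finally I would invoke the Borichev–Tomilov theorem \cite{borichev10}: for a bounded $C_0$-semigroup with $i\R \subset \rho(\mathcal{A})$, the bound $\|(i\beta-\mathcal{A})^{-1}\| = O(|\beta|^{a})$ as $|\beta|\to\infty$ is equivalent to $\|e^{t\mathcal{A}}\mathcal{A}^{-1}\| = O(t^{-1/a})$, which translates into $E(t) \lesssim (1+t)^{-1/a}\|(u(0),u_t(0))\|_{H^s\times H^{s/2}}$; with $a = \tfrac{4}{s}-2$ this is the claimed $(1+t)^{-s/(4-2s)}$ rate for $0<s<2$. For $s\ge 2$ the resolvent is bounded uniformly on the imaginary axis, so the Gearhart–Prüss theorem \cite{gearhart78, pruss84} gives exponential decay $E(t)\lesssim e^{-\omega t}E(0)$.

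The main obstacle is the second step: making the reduction from the scalar Helmholtz-type equation to a clean frequency-band-limited statement to which Theorem \ref{thm1} applies. One must carefully handle (i) the rescaling that turns the $O(1)$-wide annulus in frequency $|\xi|\approx\beta^{2/s}$ into a width-one interval while tracking how the relative-density constant and the constant $C$ of Theorem \ref{thm1} transform (they don't, which is the crucial point), (ii) the low- and high-frequency tails outside the critical band, where one needs quantitative ellipticity of $(\,|\xi|^2+1)^{s/2}-\beta^2$ and integration-by-parts/resolvent estimates to absorb them, and (iii) the bookkeeping of powers of $\beta$ to land exactly on the exponent $\tfrac{s}{4-2s}$. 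Steps one and three are routine semigroup theory once the resolvent estimate is in hand.
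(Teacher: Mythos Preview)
Your proposal is correct and follows exactly the route the paper indicates: the paper does not give a detailed proof of Theorem~\ref{thm-damped} but simply states that, as in \cite{malhi2020energy, green2019decay, green-jaye-mitkovski}, one derives from Theorem~\ref{thm1} a resolvent estimate and then invokes the semigroup machinery of \cite{gearhart78, pruss84, borichev10}. Your sketch fleshes out precisely this program---first-order reformulation, resolvent bound via frequency localization to the band $|\xi|\approx\beta^{2/s}$ where Theorem~\ref{thm1} applies, and then Borichev--Tomilov for $0<s<2$ and Gearhart--Pr\"uss for $s\ge 2$---with the correct exponent bookkeeping.
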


Finally, comparing Theorem \ref{thm1} and Theorem \ref{thm-gj}, it is natural to ask whether one can  also prove a version of Theorem \ref{thm1} for functions with Fourier-Bessel transform supported in a finite union of disjoint unit intervals.  Our next result shows that this is the case under the additional assumption $\alpha - \frac{1}{2} \in \N$.  We suspect that Theorem \ref{thm2} below holds for all $\alpha > -\frac{1}{2}$.

\begin{tcolorbox}[colback=SeaGreen!10, colframe=SeaGreen!80, boxrule=0.5mm, left=2mm, right=2mm, boxsep=1mm, arc=1mm.]
\begin{theorem}
\label{thm2}
Suppose $\alpha - \frac{1}{2} \in \N$, and $E\subset\R^+$ is relatively dense with respect to $\mu_{\alpha}$, meaning that
$$\text{there is }\gamma>0 \text{ such that }\mu_\alpha(E\cap [r,r+1]) \ge \gamma \mu_\alpha([r,r+1]) \text{ for all }r\ge 0.$$ 
Then, if $\supp \F_\alpha(f) \subset \bigcup_{k=1}^N I_k$, where the $I_k$'s are disjoint intervals of size $1$, then 
$$\|f\|_{L^2_\alpha(\R^+)} \lesssim \|f\|_{L^2_\alpha(E)},$$
where the constants in $\lesssim$ depend on $\alpha, \gamma, N$, but not on the positions of the intervals.
\end{theorem}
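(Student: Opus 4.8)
The plan is to mimic the proof of Theorem~\ref{thm1}, replacing the trigonometric \emph{approximation} of the Bessel function used there by an \emph{exact} identity, which is available precisely because $m:=\alpha-\tfrac12\in\N$. By Rayleigh's formula $j_{m+1/2}(z)=c_m\bigl(\tfrac1z\tfrac{d}{dz}\bigr)^m\tfrac{\sin z}{z}$, so that $z^{2m+1}j_{m+1/2}(z)=P(z)\cos z+Q(z)\sin z$ with $\deg P,\deg Q\le m$; equivalently, writing $g=x^{\alpha+1/2}f$ (an isometry from $L^2_\alpha(\R^+)$ onto $L^2(\R^+,dx)$ under which $\F_\alpha$ becomes the unitary involution with kernel proportional to $\sqrt{xy}\,J_{m+1/2}(2\pi xy)=\sum_{\ell=0}^m(xy)^{-\ell}\bigl(a_\ell\cos 2\pi xy+b_\ell\sin 2\pi xy\bigr)$), the whole transform is elementary. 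Since $\|f\|_{L^2_\alpha(A)}$ and $\|g\|_{L^2(A,dx)}$ are comparable for every measurable $A$, it suffices to prove $\|g\|_{L^2(\R^+)}\lesssim\|g\|_{L^2(E)}$; and by grouping the $O(1)$ intervals $I_k$ meeting $[0,R_0]$ (for a suitable absolute $R_0$) into a piece $f_{\mathrm{lo}}$ with $\supp\F_\alpha f_{\mathrm{lo}}\subset[0,R_0+1]$ — disposed of by Theorem~\ref{thm-gj} with a constant depending only on $\alpha,\gamma,N$ — we may assume every $I_k=[a_k,a_k+1]$ has $a_k\ge R_0$.

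The core is a local estimate on each unit cell $J=[r,r+1]$. Inserting the closed form of the kernel, writing $y=a_k+\eta$ and expanding the polynomials in $xy$, one gets (for $x>0$)
\[
g(x)=\sum_{k=1}^N\Bigl(e^{2\pi i a_k x}A_k(x)+e^{-2\pi i a_k x}B_k(x)\Bigr),\qquad
A_k(x)=\sum_{\ell=0}^m c_\ell\,x^{-\ell}\,\widehat{\psi^+_{k,\ell}}(-x),
\]
with $\psi^\pm_{k,\ell}$ supported in $[0,1]$, $\|\psi^\pm_{k,\ell}\|_{L^2}\lesssim_m a_k^{-\ell}\|\F_\alpha f\|_{L^2(I_k)}$, so each $\widehat{\psi^\pm_{k,\ell}}$ has exponential type $\le2\pi$. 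On $J$ with $r\ge1$ the weights $x^{-\ell}$ are comparable to the constant $r^{-\ell}$, and by Bernstein's inequality together with the Plancherel--P\'olya estimate each $\widehat{\psi^\pm_{k,\ell}}$ agrees on $J$, up to an error controlled \emph{locally} by its $L^2$-norm on a fixed dilate $J^*$, with a polynomial of an absolute degree $d_0$. Hence $g|_J$ is, modulo an error dominated by $\|g\|_{L^2(J^*)}$, an exponential polynomial $\sum_k(e^{2\pi i a_kx}p_k+e^{-2\pi ia_kx}q_k)$ with $2N$ frequencies and coefficients of degree $\le d_0+m$. Because the Tur\'an--Nazarov inequality is insensitive to the location of the frequencies, it gives $\int_J|P|^2\le (C/\gamma')^{O(N(d_0+m))}\int_{E\cap J}|P|^2$, with $\gamma'$ the Lebesgue relative density that $E$ inherits from its $\mu_\alpha$-relative density. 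A Kovrijkine-style selection of ``good'' cells (on which the error is genuinely dominated by $\|g\|_{L^2(J^*)}$, the bad cells carrying only a small fraction of $\|g\|_{L^2}^2$) lets one absorb the errors; summing over $r\ge1$ gives $\|g\|_{L^2([1,\infty))}^2\lesssim\|g\|_{L^2(E)}^2$, with a constant depending on $\alpha,\gamma,N$ but not on the $a_k$.

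The remaining contribution $\|g\|_{L^2([0,1])}^2$ is where I expect the main difficulty to lie: on $[0,1]$ the factors $x^{-\ell}$ are no longer slowly varying, the displayed decomposition has (mutually cancelling) singularities at the origin, and $g|_{[0,1]}$ need \emph{not} be a bounded-complexity exponential polynomial, so a direct Tur\'an argument on $[0,1]$ fails. I would instead split $f=\sum_k f_k$, use Theorem~\ref{thm1} on each piece to get $\|f_k\|_{L^2_\alpha([0,1])}\le\|f_k\|_{L^2_\alpha(\R^+)}\le C\|f_k\|_{L^2_\alpha(E)}$, and then pass from $\sum_k\|f_k\|_{L^2_\alpha(E)}^2$ back to $\|f\|_{L^2_\alpha(E)}^2$ using a quantitative almost-orthogonality: since the exponentials $e^{\pm2\pi i a_kx}$ are $1$-separated, an Ingham/Beurling-type reverse inequality on physical intervals of a fixed length $L_m\subset[R_0,\infty)$ yields $\sum_k\int_I|g_k|^2\lesssim_m\int_I|g|^2$, which, combined with the local Tur\'an estimate, controls the part of $\sum_k\|f_k\|_{L^2_\alpha(E)}^2$ coming from $E\cap[R_0,\infty)$ by $\|f\|_{L^2_\alpha(E)}^2$, the bounded part $\sum_k\|f_k\|_{L^2_\alpha(E\cap[0,R_0])}^2$ then being reabsorbed. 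Keeping every constant in this bookkeeping uniform in the positions $a_k$ — reconciling it with the fact that near the origin the ``bad'' part of $g$ is genuinely high-frequency, so its observability must come from the global band-limitation rather than from $E\cap[0,1]$ — is the crux; the hypothesis $\alpha-\tfrac12\in\N$ is used exactly so that the exponential-polynomial structure is exact and the only errors one must track are the universal ``entire function $\approx$ polynomial on a unit interval'' ones.
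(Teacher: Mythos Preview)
Your framework --- the exact finite expansion of $j_{m+1/2}$ when $m=\alpha-\tfrac12\in\N$, followed by a local Tur\'an--Nazarov estimate on unit cells --- is the same as the paper's. The execution diverges in two places, and in one of them your outline has a real gap.

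First, the near-origin region. You identify $\|g\|_{L^2([0,1])}$ as ``the crux'' and propose to recombine the pieces $f_k$ via an Ingham--Beurling almost-orthogonality argument. This step is unnecessary and, as sketched, does not close: you need $\sum_k\|f_k\|_{L^2_\alpha(E)}^2\lesssim\|f\|_{L^2_\alpha(E)}^2$, but a reverse Ingham inequality on a physical interval $I$ only yields $\sum_k\|g_k\|_{L^2(I)}^2\lesssim\|g\|_{L^2(I)}^2$, not the restricted version on $E\cap I$, and combining with Tur\'an points the wrong way. The paper disposes of the region in one line: since $|j_\alpha(t)|\lesssim(1+t)^{-\alpha-1/2}$, Cauchy--Schwarz gives $\|f\|_{L^2_\alpha([0,\varepsilon))}^2\lesssim\varepsilon N\,\|f\|_{L^2_\alpha}^2$, which is absorbed into the left side at the end. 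The ``mutually cancelling singularities'' that worry you are exactly the statement that $j_\alpha$ is bounded at the origin; working with $f$ and $\mu_\alpha$ rather than with $g=x^{\alpha+1/2}f$ keeps this visible.

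Second, the low-frequency interval. You split it off as $f_{\mathrm{lo}}$ and invoke Theorem~\ref{thm-gj}; this yields $\|f_{\mathrm{lo}}\|\lesssim\|f_{\mathrm{lo}}\|_{L^2_\alpha(E)}$, and you are again stuck recombining with $f_{\mathrm{hi}}$ on $E$. The paper instead keeps all $N$ intervals inside a \emph{single} Tur\'an estimate on each cell $J_n$: it Taylor-expands every factor to an adjustable degree $M$, controls the remainder of the high-frequency blocks ($k\ge2$) by the usual Bernstein inequality and that of the low block ($k=1$) by Ghobber--Jaming's Bernstein inequality in the variable $u=t^2$ (Lemma~\ref{lemma-bernstein}), bounds the resulting remainder operators via Schur's test with a factor $1/M!$, and finally takes $M$ large enough to absorb the total remainder. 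No good/bad cell dichotomy is needed, and no splitting of $f$ precedes the main argument.
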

\end{tcolorbox}

 To prove Theorem \ref{thm1}, we first employ the asymptotic expansion of the Bessel function, which allows us to apply Kovrizhkin’s multi-interval version of the Paneah–Logvinenko–Sereda theorem \cite{k1}. By contrast, the proof of Theorem \ref{thm2} does not invoke Kovrizhkin’s theorem directly, though it is closely modelled on his argument. This adaptation is non-trivial: when the interval is in the low-frequency regime, we rely on a Bernstein inequality due to Ghobber and Jaming \cite{gj}, whereas in the high-frequency case we treat the kernel of the Bessel transform (after suitable manipulations) as a weight.

\section{Proof of Theorem \ref{thm1}}

Note that $\mu_\alpha$-relative density and Lebesgue relative density are equivalent conditions on the set $E$. The former asks for $\mu_\alpha(E\cap [r,r+1]) \gtrsim \mu_\alpha([r,r+1])$ for all $r \ge 0$, while the latter asks for $|E \cap [r,r+1] \gtrsim 1$ for all $r \ge 0$. We record the proof in the following proposition.

\begin{proposition}
\label{relative-density-equivalence}
    Let $\alpha > -1/2$. A set $E \subset \R^+$ is $\mu_\alpha$-relatively dense if and only if it is Lebesgue relatively dense. 
\end{proposition}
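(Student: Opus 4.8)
The plan is to reduce everything to a single structural fact about $\mu_\alpha$: its Radon--Nikodym density $x\mapsto c_\alpha x^{2\alpha+1}$ (with $c_\alpha=\tfrac{2\pi^{\alpha+1}}{\Gamma(\alpha+1)}$) is positive and non-decreasing on $\R^+$, since $2\alpha+1>0$ for $\alpha>-1/2$. I would verify the equivalence window by window, splitting into the regimes $r\ge 1$ and $0\le r<1$. Throughout one may assume the relevant density constant satisfies $\gamma\le 1$, since a constant exceeding $1$ forces $E$ to have full measure in every window.

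For $r\ge 1$ the argument is a routine local doubling estimate. On $[r,r+1]$ one has $r^{2\alpha+1}\le x^{2\alpha+1}\le (r+1)^{2\alpha+1}\le 2^{2\alpha+1}r^{2\alpha+1}$, so for \emph{every} measurable $A\subseteq[r,r+1]$ we get $\mu_\alpha(A)\approx_\alpha r^{2\alpha+1}|A|$. Applying this with $A=E\cap[r,r+1]$ and with $A=[r,r+1]$ shows that the ratio $\mu_\alpha(E\cap[r,r+1])/\mu_\alpha([r,r+1])$ and the Lebesgue proportion $|E\cap[r,r+1]|$ are comparable up to constants depending only on $\alpha$; hence each density condition, restricted to windows with $r\ge 1$, is equivalent to the other on those windows.

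The regime $0\le r<1$, where the density degenerates at the origin, is the only delicate point. Here I first record that $\mu_\alpha([r,r+1])\approx_\alpha 1$, because $\int_r^{r+1}x^{2\alpha+1}\,dx$ is squeezed between $\int_0^1 x^{2\alpha+1}\,dx$ and $\int_0^2 x^{2\alpha+1}\,dx$. For the implication ``$\mu_\alpha$-dense $\Rightarrow$ Lebesgue-dense'' one estimates crudely $\mu_\alpha(E\cap[r,r+1])\le c_\alpha(r+1)^{2\alpha+1}|E\cap[r,r+1]|\le c_\alpha 2^{2\alpha+1}|E\cap[r,r+1]|$, so that $\mu_\alpha(E\cap[r,r+1])\ge\gamma\,\mu_\alpha([r,r+1])\gtrsim_\alpha\gamma$ forces $|E\cap[r,r+1]|\gtrsim_\alpha\gamma$. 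For the converse, the key observation is that a Lebesgue-substantial subset of $[r,r+1]$ cannot hide all its mass near $0$: since $x^{2\alpha+1}$ is non-decreasing, the bathtub principle gives, with $t:=|E\cap[r,r+1]|\ge\gamma$,
$$\int_{E\cap[r,r+1]}x^{2\alpha+1}\,dx\ \ge\ \int_r^{r+t}x^{2\alpha+1}\,dx\ \ge\ \int_0^\gamma x^{2\alpha+1}\,dx\ =\ \frac{\gamma^{2\alpha+2}}{2\alpha+2},$$
and combining this with $\mu_\alpha([r,r+1])\le c_\alpha\,2^{2\alpha+2}/(2\alpha+2)$ yields $\mu_\alpha(E\cap[r,r+1])\ge(\gamma/2)^{2\alpha+2}\,\mu_\alpha([r,r+1])$.

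Putting the two regimes together establishes the equivalence, with the constant relating the two density parameters depending only on $\alpha$ (in fact, on $\gamma$ only through the exponent $2\alpha+2$). The only subtlety worth flagging is the one just described: near the origin the weight vanishes, so to pass from Lebesgue density to $\mu_\alpha$-density one must invoke monotonicity of the weight to prevent the mass from concentrating at $0$; away from the origin everything collapses to the local doubling of $\mu_\alpha$.
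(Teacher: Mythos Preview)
Your proof is correct. Both you and the paper reduce to the local comparability of the weight $x^{2\alpha+1}$ away from the origin (you split at $r=1$, the paper at $r=\gamma/2$), and the only substantive step is the direction Lebesgue $\Rightarrow$ $\mu_\alpha$ near the origin. Here the two arguments diverge slightly: the paper discards the subinterval $[r,r+\gamma/2]$ explicitly, observing that at least $\gamma/2$ of the Lebesgue mass of $E$ must survive in $[r+\gamma/2,r+1]$, where the weight is bounded below by $(\gamma/2)^{2\alpha+1}$; you instead invoke the bathtub/rearrangement principle to say that, among all subsets of $[r,r+1]$ with prescribed Lebesgue measure $t$, the one minimizing $\mu_\alpha$-mass is the leftmost interval $[r,r+t]$. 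Your formulation is a bit cleaner and yields the same constant $(\gamma/2)^{2\alpha+2}$ without the ad hoc split, but the underlying idea---monotonicity of the weight prevents the mass from hiding at the origin---is the same in both. One small wording quibble: your final sentence about the constant ``depending only on $\alpha$'' is potentially confusing, since the new density parameter $\widetilde\gamma$ certainly depends on $\gamma$; what you presumably mean is that the \emph{map} $\gamma\mapsto\widetilde\gamma$ depends only on $\alpha$.
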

\begin{proof}
    Suppose $\gamma > 0$ is such that $|E \cap [r, r+1]| \ge \gamma$ for all $r \ge 0$. We will find $\widetilde{\gamma} > 0$ such that $\mu_\alpha(E \cap [r,r+1]) \ge \widetilde{\gamma} \mu_\alpha([r,r+1])$. Using $r^{2\alpha + 1} \le x^{2\alpha + 1} \le (r+1)^{2\alpha + 1},$ we have the inequalities
    $$c_1 r^{2\alpha + 1} |E \cap [r,r+1]| \le \mu_\alpha(E \cap [r,r+1]) \le c_1 (r+1)^{2\alpha + 1}\, |E \cap [r,r+1]|,$$
    and $$c_1 r^{2\alpha + 1} \le \mu_\alpha([r,r+1]) \le c_1 (r+1)^{2\alpha + 1},$$
    where $c_1 := \frac{2\pi^{\alpha + 1}}{\Gamma(\alpha + 1)}$. Combining the two, we get
    $$\frac{\mu_\alpha(E \cap [r,r+1])}{\mu_\alpha([r,r+1])} \ge \gamma \left(\frac{r}{r+1}\right)^{2\alpha + 1}  \ge \gamma \left(\frac{\frac{\gamma}{2}}{\frac{\gamma}{2}+1}\right)^{2\alpha + 1} > 0,$$
    when $r \ge \frac{\gamma}{2}$. If $r \le \frac{\gamma}{2}$, we split $I := [r,r+1]$ as $I = J \cup J'$, where $J = [r + \frac{\gamma}{2}, r + 1]$ and $J' = [r, r+ \frac{\gamma}{2}]$. We have $$\mu_\alpha(E \cap I) \ge \mu_\alpha(E \cap J) = c_1\int_{E \cap J} x^{2\alpha + 1}\, dx \ge c_1 \left(\frac{\gamma}{2} \right)^{2\alpha + 1} |E \cap J| \ge c_1 \left(\frac{\gamma}{2} \right)^{2\alpha + 2},$$
    as
    $$|E \cap J| = |E \cap I| - |E \cap J'| \ge \gamma - \frac{\gamma}{2} = \frac{\gamma}{2}.$$
    On the other hand, $$\mu_\alpha(I) = c_1\int_I x^{2\alpha + 1}\, dx \le c_1 (r+1)^{2\alpha + 1} \le c_1 \left(\frac{\gamma}{2} + 1 \right)^{2\alpha + 1},$$
    giving 
    $$\frac{\mu_\alpha(E \cap [r,r+1])}{\mu_\alpha([r,r+1])} \ge \frac{\gamma}{2} \left(\frac{\frac{\gamma}{2}}{\frac{\gamma}{2}+1}\right)^{2\alpha + 1},$$
    for all $r \le \frac{\gamma}{2}$. We obtain $\mu_\alpha(E \cap [r,r+1]) \ge \widetilde{\gamma} \mu_\alpha([r,r+1])$ with $\widetilde{\gamma} := \frac{\gamma}{2} \left(\frac{\frac{\gamma}{2}}{\frac{\gamma}{2}+1}\right)^{2\alpha + 1}$. The reverse implication is entirely analogous.
\end{proof}

We will use the following multi-interval Logvinenko-Sereda theorem for the Fourier transform, due to Kovrizhkin.
\begin{theorem}[\cite{k1}]
    \label{multi-interval-pls}
    Let $E\subset \R$ satisfy $$|E \cap [r,r+1]| \ge \gamma,$$
    for some constant $\gamma > 0$. 
    Let $\{J_k\}_{k=1}^N$ be intervals with $|J_k| = b$. If $1\le p \le \infty$ and $f\in L^p(\R)$ with $\operatorname{supp} \widehat{f} \subset \bigcup_{k=1}^N J_k$, then 
    $$\|f\|_{L^p(\R)} \le \left(\frac{\gamma}{C}\right)^{-ab \left(\frac{C}{\gamma}\right)^N - N + \frac{p-1}{p}}\, \|f\|_{L^p(E)}.$$
\end{theorem}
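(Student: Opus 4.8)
The plan is to reduce everything to a single local estimate on unit intervals and then add up. Partition $\R$ into the unit intervals $Q_j = [j,j+1]$. Assuming first $1\le p<\infty$, it suffices to produce a constant $K$ — whose $p$th root is the constant claimed in the statement — with
$$\int_{Q_j}|f|^p \le K\int_{Q_j\cap E}|f|^p \qquad (j\in\Z);$$
summing over $j$ and using $\|f\|_{L^p(\R)}^p=\sum_j\int_{Q_j}|f|^p$ gives the theorem, and $p=\infty$ follows from the analogous (simpler) local statement or by a limiting argument. The hypothesis gives $|E\cap Q_j|\ge\gamma$ for every $j$, so each local estimate is a genuine one-interval Logvinenko--Sereda inequality with a fixed density. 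At the outset I would record the band decomposition: writing $a_k$ for the left endpoint of $J_k$ and $\widehat{g_k}(\xi)=\widehat f(\xi+a_k)\,\chi_{[0,b]}(\xi)$, we have
$$f(x)=\sum_{k=1}^N e^{2\pi i a_k x}\,g_k(x),\qquad \supp\widehat{g_k}\subset[0,b],$$
so each $g_k$ is, up to a modulation, an entire function of exponential type $\pi b$; in particular it obeys Bernstein's inequality $\|g_k'\|_\infty\le 2\pi b\,\|g_k\|_\infty$ and the integrated Remez inequality for exponential type: for any interval $I$ and measurable $\omega\subset I$,
$$\int_I|g_k|^p \le \Bigl(\tfrac{C|I|}{|\omega|}\Bigr)^{p(Cb|I|+1)}\int_\omega|g_k|^p.$$

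The second, and crucial, ingredient is the frequency-independent Remez--Tur\'an inequality for exponential sums (Nazarov's local Tur\'an lemma): if $P(x)=\sum_{m=1}^M c_m e^{2\pi i\xi_m x}$ with arbitrary distinct real $\xi_m$, $I$ is an interval and $\omega\subset I$ is measurable, then
$$\int_I|P|^p \le \Bigl(\tfrac{A|I|}{|\omega|}\Bigr)^{pM}\int_\omega|P|^p,$$
with $A$ absolute. The point is that the right-hand side does not see the frequencies $\xi_m$, hence — once combined with the decomposition above — not the (possibly enormous) positions $a_k$ of the bands.

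For the local estimate on a fixed $Q=Q_j$, I would subdivide $Q$ into roughly $b$ subintervals at a scale fine enough that each $g_k$ is nearly constant on every piece up to a controlled multiplicative error (Bernstein), and run a pigeonhole on $E$: throwing away the pieces $I$ with $|E\cap I|$ below a fixed small multiple of $\gamma|I|$ costs less than $\tfrac{\gamma}{2}$ in total, so the surviving family $\mathcal G$ satisfies $|E\cap I|\gtrsim\gamma|I|$ for every $I\in\mathcal G$ and $\bigl|\bigcup_{\mathcal G}(E\cap I)\bigr|\gtrsim\gamma$. On each $I\in\mathcal G$ the function $f$ agrees with the length-$N$ exponential sum $\sum_k c_{k,I}\,e^{2\pi i a_k x}$ up to the error, so the Remez--Tur\'an inequality with $M=N$ and $\omega=E\cap I$ (for which $|I|/|\omega|\lesssim 1/\gamma$) yields $\int_I|f|^p\lesssim(C/\gamma)^{pN}\int_{E\cap I}|f|^p$ modulo error; adding over $\mathcal G$ controls $\int_{\bigcup_{\mathcal G}I}|f|^p$ by $(C/\gamma)^{pN}\int_{E\cap Q}|f|^p$ plus accumulated error. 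The remaining task is to pass from this control on $\bigcup_{\mathcal G}I$ — a subset of $Q$ of relative measure $\gtrsim\gamma$ — back to control on all of $Q$, using the band-limited Remez inequality for the individual $g_k$; this contributes the further factor $(C/\gamma)^{Cb}$, and the resulting $K$ then has the stated shape, exponential in $N$ and linear-in-the-exponent in $b$.

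The main obstacle is exactly this last reassembly, together with the bookkeeping of the approximation errors. Since $f$ is a superposition of band-limited pieces sitting at far-apart frequencies, it is not itself band-limited to a short interval, so ``$f$ small on a subset of $Q$ of measure $\gtrsim\gamma$'' does not immediately force ``$f$ small on $Q$''; recovering the $g_k$ from $f$ on such a subset reintroduces the cancellations between bands, and one must ensure that the errors from replacing the $g_k$ by constants remain subordinate to the a priori tiny quantity $\int_{E\cap I}|f|^p$ that the estimate bounds from below. It is this difficulty that forces Kovrizhkin's induction on the number of bands $N$ — the estimate is built one band at a time, the single-band case (handled cleanly by the exponential-type Remez inequality on unit intervals) serving as the base — and it is this induction that produces the factor $(C/\gamma)^N$ in the exponent. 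I would follow that induction for the reassembly step.
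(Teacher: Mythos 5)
This statement is not proved in the paper at all: it is Kovrizhkin's multi--interval Logvinenko--Sereda theorem, imported verbatim from \cite{k1}, so the only fair comparison is between your sketch and Kovrizhkin's actual argument. Measured against that, your proposal has a genuine gap at its foundation. Both the opening reduction (``it suffices to prove $\int_{Q_j}|f|^p\le K\int_{Q_j\cap E}|f|^p$ for every unit interval $Q_j$'') and the ``integrated Remez inequality for exponential type'' you invoke for the single bands are false as stated. A function of exponential type $\pi b$ in $L^p(\R)$ can, on a fixed unit interval, imitate a polynomial of arbitrarily high degree: take $g(x)=p_n(x)\bigl(\tfrac{\sin(\pi(x-x_0)/m)}{\pi(x-x_0)/m}\bigr)^{m}$ with $m>n$, which has type $\pi$ \emph{independently of} $n$, lies in $L^p(\R)$, and on $I=[x_0-\tfrac12,x_0+\tfrac12]$ equals the degree-$n$ polynomial $p_n$ up to a factor close to $1$. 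Choosing $p_n$ extremal for the polynomial Remez inequality on $I$ relative to a subset $\omega\subset I$ with $|\omega|=\tfrac12$ makes $\int_I|g|^p/\int_\omega|g|^p$ grow exponentially in $n$, so no constant depending only on $b$, $|I|$ and $|I|/|\omega|$ can work, and likewise no uniform per-interval constant $K(\gamma,b)$ exists (the global theorem survives because such an interval carries only a tiny share of $\|g\|_p^p$). This is exactly why Kovrizhkin does not sum local estimates over all unit intervals: he partitions $\R$ into intervals adapted to $b$, calls an interval \emph{good} when the local $L^p$ norms of suitable derivatives of $f$ are controlled by the local norm of $f$ itself, shows via Bernstein's inequality in $L^p$ (applied globally, not pointwise) that the bad intervals carry at most half of $\|f\|_p^p$, and only on good intervals does he have the analyticity control needed to run a Remez/Tur\'an-type lemma.

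The second, related, issue is the bookkeeping of the constant. Your accounting (a factor $(C/\gamma)^{pN}$ from Nazarov--Tur\'an plus a factor $(C/\gamma)^{Cb}$ from reassembly) would give an exponent of order $N+Cb$, which is strictly stronger than the theorem and is not what the induction on $N$ yields: in Kovrizhkin's argument each inductive step (splitting off one band and treating the remaining $N-1$ bands through the Tur\'an lemma) multiplies the $b$-term in the exponent by another factor $C/\gamma$, which is precisely the origin of the $a\,b\,(C/\gamma)^{N}$ in the exponent of the stated bound. So while your last paragraph correctly identifies that the reassembly forces an induction on $N$, the sketch as written rests on local inequalities that fail for band-limited functions, and repairing it means replacing the unit-interval reduction by the good/bad interval scheme with Bernstein used in $L^p$, which is the heart of \cite{k1} rather than a routine summation.
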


\begin{proof}[Proof of Theorem \ref{thm1}]
    Without loss of generality, consider $f\in L^1_\alpha(\R^+) \cap L^2_\alpha(\R^+)$, and assume $\supp \F_\alpha(f) \subset [R,R+1]$ for some $R > 0$. Fourier-Bessel inversion gives
$$f(t) = c_\alpha \int_R^{R+1} \mathcal{F}_\alpha(f)(y) j_\alpha(2\pi ty)\, y^{2\alpha + 1}\, dy,$$
where $c_\alpha = \frac{2\pi^{\alpha + 1}}{\Gamma(\alpha + 1)}.$ As $t\to\infty$, we have the following asymptotic,
$$j_{\alpha}(t) = A_\alpha\,
t^{-\alpha - 1/2} \cos\!\left( t -\delta \right)
+ O(t^{-\alpha - 3/2}),$$
where $A_\alpha =\frac{2^{\alpha + 1/2} \Gamma(\alpha + 1)}{\sqrt{\pi}}$ and $\delta = (2\alpha + 1)\frac{\pi}{4}$.

Through a change of variables $y = R+s$, we get
$$f(t) = c_\alpha\int_0^1 \mathcal{F}_\alpha(f)(R+s) j_\alpha(2\pi(R+s)t)\, (R+s)^{2\alpha + 1}\, ds.$$
So,
$$\|f\|_{L^2_\alpha}^2 = c_\alpha^3 \int_0^\infty \left|\int_0^1 \mathcal{F}_\alpha(f)(R+s) j_\alpha(2\pi(R+s)t)\, (R+s)^{2\alpha + 1}\, ds\right|^2 \, t^{2\alpha + 1}\, dt.$$

We can cut out $[0, \ep]$ for small enough $\ep \in (0,1)$. Indeed, consider
$$I_\ep := \int_0^\ep \left|\int_0^1 \mathcal{F}_\alpha(f)(R+s) j_\alpha(2\pi(R+s)t)\, (R+s)^{2\alpha + 1}\, ds\right|^2 \, t^{2\alpha + 1}\, dt.$$

We will use the estimate
\begin{equation}
    \label{bessel-function-estimate}
    |j_\alpha(t)| \lesssim_\alpha (1+t)^{-\alpha-1/2}
\end{equation}
from \cite{gj}. Since $\alpha > -1/2$, this implies $|j_\alpha(t)| \lesssim_\alpha t^{-\alpha-1/2}$ which is all we need. We have
\begin{align*}
    I_{\ep} &\le \int_0^\ep \left(\int_0^1 |\mathcal{F}_\alpha(f)(R+s)| |j_\alpha(2\pi(R+s)t)|\, (R+s)^{2\alpha + 1}\, ds\right)^2 \, t^{2\alpha + 1}\, dt\\
    &\lesssim \int_0^\ep \left(\int_0^1 |\mathcal{F}_\alpha(f)(R+s)| (R+s)^{\alpha + 1/2}\, ds\right)^2 \, dt\\ 
    &\lesssim \int_0^\ep \int_0^1 |\mathcal{F}_\alpha(f)(R+s)|^2 (R+s)^{2\alpha + 1}\, ds \, dt\\
    &\lesssim \ep\,  \|\F_\alpha(f)\|^2_{L^2_\alpha} \lesssim \ep\, \|f\|^2_{L^2_\alpha},
\end{align*}
using the Cauchy-Schwarz inequality and Plancherel's theorem for $L^2_\alpha(\R^+)$.

Now, consider 
$$J_{1, \ep} := \int_\ep^\infty \left|\int_0^1 \mathcal{F}_\alpha(f)(R+s) ((R+s)t)^{-\alpha-1/2} \cos(2\pi(R+s)t-\delta)\, (R+s)^{2\alpha + 1}\, ds\right|^2 \, t^{2\alpha + 1}\, dt,$$
and
$$J_{2, \ep} := \int_\ep^\infty \left|\int_0^1 \mathcal{F}_\alpha(f)(R+s) ((R+s)t)^{-\alpha-3/2}\, (R+s)^{2\alpha + 1}\, ds\right|^2 \, t^{2\alpha + 1}\, dt.$$
The error term $J_{2, \ep}$ is easily controlled as follows:
\begin{align*}
    J_{2,\ep} &= \int_\ep^\infty \frac{dt}{t^2} \int_0^1 |\F_\alpha(f)(R+s)|^2 (R+s)^{-2} (R+s)^{2\alpha+1}\, ds\\
    &\le \frac{1}{\ep R^2} \int_0^1 |\F_\alpha(f)(R+s)|^2 (R+s)^{2\alpha+1}\, ds \lesssim \frac{\|f\|^2_{L^2_\alpha}}{\ep R^2}.
\end{align*}

Now, we focus on $J_{1, \ep}$. The powers of $t$ cancel to give
$$J_{1,\ep} = \int_\ep^\infty \left|\int_0^1 \mathcal{F}_\alpha(f)(R+s) (R+s)^{\alpha+1/2} \cos(2\pi(R+s)t-\delta)\, ds\right|^2 \, dt.$$
For convenience, let $h(s) := \mathcal{F}_\alpha(f)(R+s) (R+s)^{\alpha+1/2}$ so that $\int_0^\infty |h(s)|^2\, ds \approx \|f\|^2_{L^2_\alpha}$ and $\operatorname{supp} h \subset [0,1]$. Also, define $$g(t) := \int_0^1 h(s) \cos(2\pi(R+s)t - \delta)\, ds.$$ Then, $J_{1,\ep} = \int_\ep^\infty |g(t)|^2\, dt$. Writing $\cos(2\pi Rt-\delta + 2\pi st) = \cos(2\pi Rt-\delta) \cos(2\pi st) - \sin(2\pi Rt-\delta) \sin(2\pi st)$, we get 
\begin{align*}
g(t) &= \cos(2\pi Rt-\delta) \int_0^1 h(s)\, \cos(2\pi st)\, ds 
     - \sin(2\pi Rt-\delta) \int_0^1 h(s)\, \sin(2\pi st)\, ds \\
&= \frac{e^{i(2\pi Rt-\delta)}}{2} 
    \left( \int_0^1 h(s)\, \cos(2\pi st)\, ds 
    + i \int_0^1 h(s)\, \sin(2\pi st)\, ds \right) \\
&\quad +  \frac{e^{-i(2\pi Rt-\delta)}}{2}
    \left( \int_0^1 h(s)\, \cos(2\pi st)\, ds 
    - i \int_0^1 h(s)\, \sin(2\pi st)\, ds \right)\\
    &= \frac{e^{i(2\pi Rt-\delta)}}{2}  \int_0^1 h(s)\, e^{2\pi ist}\, ds + \frac{e^{-i(2\pi Rt-\delta)}}{2} \int_0^1 h(s)\, e^{-2\pi ist}\, ds.
\end{align*}
Define $H(t) := \int_0^1 h(s) \, e^{2\pi ist}\, ds$, so that
$$g(t) = \frac{e^{i(2\pi Rt-\delta)}}{2}  H(t) + \frac{e^{-i(2\pi  Rt-\delta)}}{2} \overline{H(t)}.$$

Note that $H(t) = \widehat{h}\left(-t\right)$ and $\widehat{H}(\xi) = h(\xi)$, implying $\operatorname{supp} \widehat{H} \subset [0, 1]$. As $\widehat{\overline{H}}(\xi) = \overline{\widehat{H}}(-\xi)$, we also have $\operatorname{supp} \widehat{\overline{H}} \subset [-1, 0]$. Using Theorem \ref{multi-interval-pls} \cite{k1} with $b = 1$, we get $$\|g\|_{L^2} \lesssim \|g\|_{L^2(E)},$$
for $R \ge 2$. Thus, $J_{1,\ep} \le \|g\|^2_{L^2} \lesssim \|g\|_{L^2(E)}^2.$ Finally, we would like to show $\|g\|_{L^2(E)}^2 \lesssim \|f\|^2_{L^2_\alpha(E)}$. 

Define $\widetilde{j_\alpha}(t) := A_\alpha t^{-\alpha-1/2} \cos(t-\delta)$. Note that 
\begin{align*}
    \|f\|^2_{L^2_\alpha(E)} &\approx \int_E |f(t)|^2\, t^{2\alpha + 1}\, dt\\
    &\approx \int_E \left|\int_0^1 \mathcal{F}_\alpha(f)(R+s)\,  j_\alpha(2\pi (R+s)t)\, (R+s)^{2\alpha + 1} ds\right|^2 \, t^{2\alpha + 1}\, dt,
\end{align*}
and
\begin{align*}
    \|g\|^2_{L^2(E)} &= \int_E \left|\int_0^1 \mathcal{F}_\alpha(f)(R+s) (R+s)^{\alpha+1/2} \cos(2\pi (R+s)t-\delta)\, ds\right|^2 \, dt\\
    &= \int_E \left|\int_0^1 \mathcal{F}_\alpha(f)(R+s) ((R+s)t)^{-\alpha-1/2} \cos(2\pi (R+s)t-\delta)\, (R+s)^{2\alpha + 1} ds\right|^2 \, t^{2\alpha + 1} dt\\
    &\approx \int_E \left|\int_0^1 \mathcal{F}_\alpha(f)(R+s) \, \widetilde{j}_\alpha(2\pi (R+s)t)\, (R+s)^{2\alpha + 1} ds\right|^2 \, t^{2\alpha + 1} dt.
\end{align*}

Once again, we can cut out $[0, \ep]$, since
\begin{align*}
    \|g\|^2_{L^2(E \cap [0,\ep])} &= \int_{E \cap [0, \ep]} \left|\int_0^1 \mathcal{F}_\alpha(f)(R+s) (R+s)^{\alpha+1/2} \cos(2\pi (R+s)t-\delta)\, ds\right|^2 \, dt\\
    &\le \int_{E \cap [0, \ep]} \left(\int_0^1 |\mathcal{F}_\alpha(f)(R+s)| (R+s)^{\alpha+1/2}\, ds\right)^2 \, dt\\
    &\le |E \cap [0,\ep]| \int_0^1 |\mathcal{F}_\alpha(f)(R+s)|^2 (R+s)^{2\alpha+1}\, ds \lesssim \ep \, \|f\|^2_{L^2_\alpha}.
\end{align*}
For the other part, we have
\begin{align*}
    \|g\|_{L^2(E \cap (\ep,\infty))} &\approx \left(\int_{E \cap (\ep,\infty)} \left|\int_0^1 \mathcal{F}_\alpha(f)(R+s) \, \widetilde{j}_\alpha(2\pi (R+s)t)\, (R+s)^{2\alpha + 1} ds\right|^2 \, t^{2\alpha + 1} dt\right)^{1/2}\\
    &\lesssim \clubsuit_1^{1/2} + \clubsuit_2^{1/2}
\end{align*}
where 
$$\clubsuit_1 := \int_{E \cap (\ep,\infty)} \left|\int_0^1 \mathcal{F}_\alpha(f)(R+s)\,  (\widetilde{j}_\alpha(2\pi(R+s)t) - {j}_\alpha(2\pi(R+s)t))\, (R+s)^{2\alpha + 1} ds\right|^2 \, t^{2\alpha + 1}\, dt,$$
and
$$\clubsuit_2 := \int_{E \cap (\ep,\infty)} \left|\int_0^1 \mathcal{F}_\alpha(f)(R+s)\,  {j}_\alpha(2\pi(R+s)t)\, (R+s)^{2\alpha + 1} ds\right|^2 \, t^{2\alpha + 1}\, dt.$$

We estimate $\clubsuit_2$ trivially, using $\clubsuit_2 \approx  \|f\|^2_{L^2_\alpha(E \cap (\ep, \infty))} \le \|f\|^2_{L^2_\alpha(E)},$ and $\clubsuit_1$ as follows.
\begin{align*}
    \clubsuit_1 &\lesssim \int_{E \cap (\ep,\infty)} \left(\int_0^1 |\mathcal{F}_\alpha(f)(R+s)|\,  ((R+s)t)^{-\alpha-3/2}\, (R+s)^{2\alpha + 1} ds\right)^2 \, t^{2\alpha + 1}\, dt\\
    &\lesssim \int_{E \cap (\ep,\infty)} \left(\int_0^1 |\mathcal{F}_\alpha(f)(R+s)|\, (R+s)^{\alpha + 1/2} (R+s)^{-1}\, ds\right)^2 t^{-2}\, dt\\
    &\lesssim \int_{E \cap (\ep,\infty)} \frac{dt}{t^2} \, \left(\int_0^1|\F_\alpha(f)(R+s)|^2 (R+s)^{2\alpha + 1}\, ds\right) \left(\int_0^1 \frac{ds}{(R+s)^2}\right)^2\\
    &\lesssim \frac{\|f\|^2_{L^2_\alpha}}{R^2} \int_{E \cap (\ep,\infty)} \frac{dt}{t^2} \le \frac{\|f\|^2_{L^2_\alpha}}{\ep R^2}.
\end{align*}
Combining these estimates, we get
\begin{align*}
    \|g\|^2_{L^2(E)} \lesssim \ep\, \|f\|^2_{L^2_\alpha} + \left(\frac{\|f\|_{L^2_\alpha}}{\sqrt{\ep} R} + \|f\|_{L^2_\alpha(E)}\right)^2 \le \|f\|^2_{L^2_\alpha} \left(\ep + \frac{1}{\ep R^2} \right) + \|f\|^2_{L^2_\alpha(E)}.
\end{align*}
As $\|f\|^2_{L^2_\alpha} \lesssim I_\ep + J_{1,\ep} + J_{2, \ep}$, we have
$$\|f\|^2_{L^2_\alpha} \lesssim \|f\|^2_{L^2_\alpha}\left(\ep +  \frac{1}{\ep R^2}\right) + \|f\|^2_{L^2_\alpha(E)}.$$

Now select $\ep = \frac{1}{R}$, then
$$\|f\|^2_{L^2_\alpha} \lesssim \frac{1}{R}\|f\|^2_{L^2_\alpha} + \|f\|^2_{L^2_\alpha(E)},$$
for all $R \ge 1$. Then, for large $R$, we get $\|f\|^2_{L^2_\alpha} \lesssim \|f\|^2_{L^2_\alpha(E)}$ with constants independent of $R$, as desired. More precisely, suppose $C > 0$ is such that $\|f\|^2_{L^2_\alpha} \le  C(\|f\|^2_{L^2_\alpha} R^{-1} + \|f\|^2_{L^2_\alpha(E)})$.  Then, for all $R \ge 2C$, 
$$\|f\|^2_{L^2_\alpha}  \le 2C \|f\|^2_{L^2_\alpha(E)} \lesssim \|f\|^2_{L^2_\alpha(E)}.$$
The theorem is proved.
\end{proof}

\section{Proof of Theorem \ref{thm2}}

In this section, $\alpha = m+ 1/2$ where $m$ is a non-negative integer. The proof is inspired by related work of Kovrizhkin \cite{k1}, and a part of the proof of Theorem \ref{thm2} relies on Nazarov's Turan inequality as stated in Lemma 3 of \cite{k1}. We record it below for convenience.

\begin{lemma}
\label{nazarov-turan-kovrizhkin}
    If $r(x) = \sum_{k=1}^N p_k(x) e^{2\pi i\lambda_k x}$, where $p_k(x)$ is a polynomial of degree $\le M-1$ and $E \subset I$ is measurable with $|E| > 0$, then
    \begin{equation}
    \label{nazarov-turan-kovrizhkin-eqn}
        \|r\|_{L^p(I)} \le \left(\frac{C|I|}{|E|}\right)^{NM - \frac{p-1}{p}}\, \|r\|_{L^p(E)}.
    \end{equation}
\end{lemma}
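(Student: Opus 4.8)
The statement is the Tur\'an--Nazarov inequality in $L^p$: $r$ has ``order'' $NM$ (each term $p_k e^{2\pi i\lambda_k x}$ with $\deg p_k\le M-1$ contributing $M$), and the exponent $NM-\tfrac{p-1}{p}$ interpolates between the $L^\infty$ exponent $NM-1$ (at $p=\infty$) and the $L^1$ exponent $NM$. The plan is to reduce the $L^p$ statement to the classical $L^\infty$ version and then recall how the latter is proved. I would first note the affine-invariance: a change of variable $x\mapsto ax+b$ sends $r$ to an exponential polynomial of the same order, scales $|I|$ and $|E|$ by the same factor, and scales both $\|r\|_{L^p(I)}$ and $\|r\|_{L^p(E)}$ by $a^{1/p}$; so one may assume $I=[0,1]$ and normalize $\|r\|_{L^p(E)}=1$, i.e. $\int_E|r|^p\,dx=1$.

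For the passage from $L^p$ to $L^\infty$ I would use the standard truncation trick: put $t:=(2/|E|)^{1/p}$ and $E':=\{x\in E:|r(x)|\le t\}$; Markov's inequality gives $|E\setminus E'|\le t^{-p}\int_E|r|^p\,dx=|E|/2$, so $|E'|\ge|E|/2$ while $\|r\|_{L^\infty(E')}\le t$. Granting the $L^\infty$ Tur\'an--Nazarov bound
\[
\|r\|_{L^\infty(I)}\le\Bigl(\tfrac{C_0|I|}{|S|}\Bigr)^{NM-1}\|r\|_{L^\infty(S)}\qquad\text{for every measurable }S\subset I,\ |S|>0,
\]
one applies it with $S=E'$ and combines with $\|r\|_{L^p(I)}\le|I|^{1/p}\|r\|_{L^\infty(I)}$; collecting powers of $|I|/|E|$ and using $NM-1+\tfrac1p=NM-\tfrac{p-1}{p}$ yields $\|r\|_{L^p(I)}\lesssim_{N,M,p}(|I|/|E|)^{NM-(p-1)/p}\|r\|_{L^p(E)}$, which is the asserted inequality after absorbing the numerical factors into $C$ (legitimate since for $NM\ge2$ the exponent exceeds $1$; the case $NM=1$, where $|r|$ is constant, is immediate, and in fact $C$ may be kept universal as in \cite{k1}).

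It then remains to recall the $L^\infty$ inequality. I would first use confluence to remove the polynomial coefficients: $p_k(x)e^{2\pi i\lambda_k x}$ is a locally uniform limit of $\sum_{j=0}^{M-1}c_{k,j}(\epsilon)e^{2\pi i(\lambda_k+j\epsilon)x}$ as $\epsilon\to0^+$, so $r$ is a locally uniform limit of exponential sums with $NM$ distinct frequencies; applying the simple-exponential bound to the approximants and letting $\epsilon\to0^+$ (uniform convergence on $I$ controls both $\operatorname{esssup}_I$ and $\operatorname{esssup}_S$) recovers the bound for $r$ with the same exponent $NM-1$. For a simple exponential sum $\sum_{k=1}^m c_k e^{2\pi i\lambda_k x}$ with $m$ distinct real frequencies, the bound $\|r\|_{L^\infty(I)}\le(C|I|/|S|)^{m-1}\|r\|_{L^\infty(S)}$ is Nazarov's theorem (see \cite{k1}): one induces on $m$, dividing by $e^{2\pi i\lambda_m x}$ and differentiating to pass to $m-1$ frequencies, and uses the Remez inequality for algebraic polynomials together with a Vitali-type covering to go from subintervals to arbitrary measurable $S$. (Complex $\lambda_k$ would cost only a harmless factor depending on $|I|\max_k|\Im\lambda_k|$; in the application to Theorem \ref{thm2} the frequencies are real, being fixed by the positions of the intervals $I_k$.)

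The hard part is the $L^\infty$ bound itself, specifically the requirement that $C_0$ be \emph{uniform in the locations of the frequencies} $\lambda_k$ --- exactly the feature that makes the lemma usable in Theorem \ref{thm2}, where the intervals $I_k$, and hence the frequencies, are arbitrary --- and that the exponent be exactly $NM-1$; this uniformity is the whole content of Nazarov's argument. By comparison, the confluence reduction and the $L^p\to L^\infty$ step are routine.
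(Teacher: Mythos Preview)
The paper does not prove this lemma at all: it is quoted verbatim as Lemma~3 of \cite{k1} and introduced with ``We record it below for convenience.'' Your sketch is a correct outline of the standard proof (Markov truncation to pass from $L^p$ to $L^\infty$, confluence to reduce polynomial coefficients to simple exponential sums, then Nazarov's frequency-uniform $L^\infty$ Tur\'an inequality), and you have correctly identified that the only substantive content lies in the uniformity of $C_0$ with respect to the frequencies $\lambda_k$.
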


We also use the following Bernstein inequality of Ghobber and Jaming \cite{gj} to tackle the low-frequency block:

\begin{lemma}
    \label{lemma-bernstein}
    Let $E \subset \R^+$ be a $\mu_\alpha$-relatively dense set, i.e., for some $\gamma > 0$, 
    $$\mu_\alpha(E \cap [r,r+1]) \ge \gamma \mu_\alpha([r,r+1]),$$
    for all $r\ge 0$. Let $f\in L^2_\alpha(\R^+)$ such that $\supp \F_\alpha (f) \subset [0,R]$, and $g(x) := f(\sqrt x)$. Then,
    \begin{equation}
        \int_0^\infty |g^{(k)}(s)|^2\, s^{\alpha + k}\, ds \le (\pi R)^{2k} \int_0^\infty |g(s)|^2\, s^\alpha \, ds.
    \end{equation}
\end{lemma}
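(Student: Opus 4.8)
The plan is to reduce the claimed inequality, via the substitution $s=u^{2}$, to a band-limiting estimate plus Plancherel for a \emph{shifted} Fourier--Bessel transform. Note first that the hypothesis that $E$ is $\mu_\alpha$-relatively dense plays no role in this particular estimate; it is carried along only because the lemma is invoked inside the proof of Theorem \ref{thm2}. Since $\supp \F_\alpha(f)\subset[0,R]$, the function $\F_\alpha(f)$ is compactly supported and lies in $L^2_\alpha$, hence in $L^1_\alpha$, so Fourier--Bessel inversion gives the (continuous, in fact real-analytic) representative
\[
f(u)\;=\;c_\alpha\int_0^R \F_\alpha(f)(y)\, j_\alpha(2\pi u y)\, y^{2\alpha+1}\, dy,\qquad c_\beta:=\tfrac{2\pi^{\beta+1}}{\Gamma(\beta+1)}.
\]
Because $j_\alpha(t)=\Gamma(\alpha+1)\sum_{n\ge 0}\tfrac{(-1)^n}{n!\,\Gamma(n+\alpha+1)}(t/2)^{2n}$ is a power series in $t^{2}$, the integrand above, viewed as a function of $s=u^{2}$, is a power series in $s$; thus $g(s)=f(\sqrt s)$ is entire in $s$ and its derivatives may be computed by differentiating under the integral sign.

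The key computation is that differentiating the series for $j_\alpha(2\pi\sqrt s\,y)$ term by term in $s$ and reindexing yields the identity
\[
\frac{d^{k}}{ds^{k}}\, j_\alpha\!\big(2\pi\sqrt s\, y\big)\;=\;(-1)^{k}\pi^{2k}y^{2k}\,\frac{\Gamma(\alpha+1)}{\Gamma(\alpha+k+1)}\; j_{\alpha+k}\!\big(2\pi\sqrt s\, y\big),
\]
which is just the normalized form of the classical Bessel relation $\tfrac{d}{dt}\big(t^{-\nu}J_\nu(t)\big)=-t^{-\nu}J_{\nu+1}(t)$ iterated $k$ times. Inserting this into the integral for $g$ and absorbing the factor $y^{2k}$ into the weight $y^{2\alpha+1}$ to form $y^{2(\alpha+k)+1}$, one recognizes
\[
g^{(k)}(u^{2})\;=\;(-\pi)^{k}\; \F_{\alpha+k}\!\big[\F_\alpha(f)\big](u),
\]
where $\F_{\alpha+k}$ denotes the Fourier--Bessel transform of order $\alpha+k$; the constant $(-\pi)^{k}$ emerges after collecting $c_\alpha/c_{\alpha+k}=\Gamma(\alpha+k+1)/(\pi^{k}\Gamma(\alpha+1))$. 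The right-hand side is well defined because $\F_\alpha(f)$, being compactly supported and square-integrable, lies in $L^1_{\alpha+k}\cap L^2_{\alpha+k}$ (recall $\alpha+k>-\tfrac12$).

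It remains to assemble the pieces. Substituting $s=u^{2}$ turns $\int_0^\infty|g^{(k)}(s)|^{2}s^{\alpha+k}\,ds$ into $2\int_0^\infty |g^{(k)}(u^{2})|^{2}u^{2(\alpha+k)+1}\,du$; by the previous display and the Plancherel (isometry) identity for $\F_{\alpha+k}$ on $L^2_{\alpha+k}$ this equals $\tfrac{2\pi^{2k}}{c_{\alpha+k}}\|\F_\alpha(f)\|_{L^2_{\alpha+k}}^{2}=2\pi^{2k}\int_0^R|\F_\alpha(f)(y)|^{2}y^{2k}\,y^{2\alpha+1}\,dy$. Since the support is contained in $[0,R]$ the factor $y^{2k}$ is at most $R^{2k}$, so this is $\le 2\pi^{2k}R^{2k}\int_0^\infty|\F_\alpha(f)(y)|^{2}y^{2\alpha+1}\,dy=(\pi R)^{2k}\cdot\tfrac{2}{c_\alpha}\|f\|_{L^2_\alpha}^{2}$ by the isometry of $\F_\alpha$; and likewise $\int_0^\infty|g(s)|^{2}s^{\alpha}\,ds=2\int_0^\infty|f(u)|^{2}u^{2\alpha+1}\,du=\tfrac{2}{c_\alpha}\|f\|_{L^2_\alpha}^{2}$, so the two quantities agree up to precisely the factor $(\pi R)^{2k}$, as claimed. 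I do not expect a genuine obstacle here: the only points requiring care are the bookkeeping of the normalization constants $c_\beta$ (so that exactly $(\pi R)^{2k}$, and not a larger constant, appears), and the routine justification of the term-by-term differentiation and of passing to the continuous representative of $f$; the substantive content is the shift identity that expresses $\tfrac{d^{k}}{ds^{k}}g$ through the order-$(\alpha+k)$ transform.
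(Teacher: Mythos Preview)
Your argument is correct. The paper does not actually prove this lemma: it is quoted verbatim as ``the following Bernstein inequality of Ghobber and Jaming \cite{gj},'' so there is no in-paper proof to compare against. Your observation that the hypothesis on $E$ is irrelevant to the inequality is also right; that clause is vestigial in the statement as recorded.

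For what it is worth, the route you take---the order-shift identity $\tfrac{d^{k}}{ds^{k}}j_\alpha(2\pi\sqrt{s}\,y)=(-1)^{k}(\pi y)^{2k}\tfrac{\Gamma(\alpha+1)}{\Gamma(\alpha+k+1)}j_{\alpha+k}(2\pi\sqrt{s}\,y)$, which turns $g^{(k)}(u^{2})$ into $(-\pi)^{k}\F_{\alpha+k}[\F_\alpha f](u)$, followed by Plancherel for $\F_{\alpha+k}$ and the trivial bound $y^{2k}\le R^{2k}$ on the support---is exactly the mechanism behind Ghobber and Jaming's Bernstein inequality. Your bookkeeping of the constants $c_\beta=2\pi^{\beta+1}/\Gamma(\beta+1)$ is accurate and yields the sharp factor $(\pi R)^{2k}$ with no loss. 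The only technical points (differentiation under the integral, passing to the analytic representative of $f$) are handled by the compact support of $\F_\alpha f$ and the entire, locally uniformly convergent expansion of $j_\alpha$, as you note.
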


\begin{proof}[Proof of Theorem \ref{thm2}]
    
Assume $I_k := [\lambda_k, \lambda_k + 1]$. Without loss of generality, we may assume $\lambda_1 \in [0, 1)$ (otherwise, the proof that follows can be significantly simplified). We use Fourier-Bessel inversion to get
$$f(t) = c_\alpha\int_0^\infty \mathcal{F}_\alpha(f)(y) j_\alpha(2\pi ty)\, y^{2\alpha + 1}\, dy =  c_\alpha\,\sum_{k=1}^N \int_{I_k} \mathcal{F}_\alpha(f)(y) j_\alpha(2\pi ty)\, y^{2\alpha + 1}\, dy,$$
and
$$\|f\|^2_{L^2_\alpha} = c_\alpha^3 \int_0^\infty \left| \int_0^\infty \mathcal{F}_\alpha(f)(y) j_\alpha(2\pi ty)\, y^{2\alpha + 1}\, dy \right|^2\, t^{2\alpha + 1}\, dt.$$

First, we rewrite the Bessel kernel using Cauchy's integral theorem. Using the Poisson representation, 
$$j_\alpha(s) = \frac{\Gamma(\alpha + 1)}{\Gamma\left(\alpha + \frac12\right) \Gamma\left(\frac12\right)} \int_{-1}^1 e^{is x}\, (1-x^2)^{\alpha - 1/2}\, dx.$$
A classical contour integration argument \citepre{Appendix A}{ss-complex} yields
$$j_\alpha(s) = \frac{\Gamma(\alpha + 1)}{\Gamma\left(\alpha + \frac12\right) \Gamma\left(\frac12\right)} (-I_{+}(s) - I_{-}(s)),$$
where $$I_+(s) = ie^{is} \int_0^\infty e^{-sy} (1 - (1 + iy)^2)^{\alpha - 1/2}\, dy,$$
and
$$I_-(s) = -ie^{-is} \int_0^\infty e^{-sy} (1 - (-1 + iy)^2)^{\alpha - 1/2}\, dy.$$
Note that $(1 - (1+iy)^2)^m = (y^2 - 2iy)^m = \sum_{j=m}^{2m} a_j y^j$, and $(1 - (-1+iy)^2)^m = (y^2 + 2iy)^m = \sum_{j=m}^{2m} b_j y^j$ for appropriate constants $a_j, b_j\ \in \C$. This gives,
\begin{equation*}
    I_+(s) = ie^{is} \int_0^\infty e^{-sy} \, \sum_{j=m}^{2m} a_j y^j \, dy = ie^{is} \sum_{j=m}^{2m} j!\, a_j s^{-j-1},
\end{equation*}
and similarly,
\begin{equation*}
    I_-(s) = -ie^{-is} \sum_{j=m}^{2m} j!\, b_j s^{-j-1}.
\end{equation*}
Therefore,
\begin{equation*}
    j_\alpha(s) = \sum_{\pm } \sum_{j=m}^{2m} e^{\pm is} c_{\pm, j}\, s^{-j-1},
\end{equation*}
for appropriate constants $c_{\pm, j}\in \C$.\footnote{This finite expansion for $j_\alpha(s)$ can also be obtained in a more direct way using Rayleigh's formula, where $\alpha = m+1/2$ and $m$ is a non-negative integer: $$j_{m+1/2}(x) = (2m+1)!!\, (-1)^m \left(\frac{1}{x} \frac{d}{dx} \right)^m \frac{\sin x}{x}$$} Substituting $s = 2\pi ty$, we have 
\begin{equation*}
    j_\alpha(2\pi ty) = \sum_{\pm } \sum_{j=m}^{2m} e^{\pm 2\pi i ty} c_{\pm, j}\, (2\pi t)^{-j-1} y^{-j-1},
\end{equation*}
giving
\begin{align*}
    f(t) &= c_\alpha \sum_{k=1}^N \int_{I_k} \F_\alpha(f)(y) j_\alpha(2\pi ty)\,  y^{2\alpha +1}\, dy\\
    &= c_\alpha\sum_{\pm} \sum_{k=1}^N \sum_{j=m}^{2m} c_{\pm, j}\, (2\pi t)^{-j-1} \int_{I_k} \F_\alpha f(y)\,  e^{\pm 2\pi i ty}\,  y^{2\alpha - j}\, dy\\
    &= c_\alpha \sum_{\pm} \sum_{k=1}^N \sum_{j=m}^{2m} c_{\pm, j}\, (2\pi t)^{-j-1} \int_{0}^1 \F_\alpha f(\lambda_k + s)\,  e^{\pm 2\pi i t(\lambda_k + s)}\,  (\lambda_k + s)^{2\alpha - j}\, ds. 
\end{align*}
For convenience, define $H_{j,k}(s) := \F_\alpha (f)(\lambda_k + s) (\lambda_k + s)^{2\alpha - j}$, $g_{\pm, j, k}(t) := \int_0^1 H_{j,k}(s)\, e^{\pm 2\pi i ts}\, ds$, and $G_{\pm, j, k}(t) := g_{\pm, j, k}(t) e^{\pm 2\pi i\lambda_k t}$. Then,
\begin{align*}
    f(t) &= c_\alpha \sum_{\pm} \sum_{k=1}^N \sum_{j=m}^{2m} c_{\pm, j}\, (2\pi t)^{-j-1} \int_{0}^1 H_{j,k}(s)\,  e^{\pm 2\pi i t(\lambda_k + s)}\, ds\\
    &= c_\alpha \sum_{\pm} \sum_{k=1}^N \sum_{j=m}^{2m} c_{\pm, j}\, (2\pi t)^{-j-1}\,  g_{\pm, j,k}(t) \, e^{\pm 2\pi i\lambda_k t}\\
    &= c_\alpha \sum_{\pm} \sum_{k=1}^N \sum_{j=m}^{2m} c_{\pm, j}\, (2\pi t)^{-j-1} \, G_{\pm, j,k}(t) = c_\alpha \sum_{k=1}^N F_k(t),
\end{align*}
where $F_k(t) := \sum_{\pm} \sum_{j=m}^{2m} c_{\pm, j}\, (2\pi t)^{-j-1} \, G_{\pm, j,k}(t)$. Taking the Fourier transform of $g_{\pm,j,k}$, we see that
$$\widehat{g_{+,j,k}}(\xi) = \begin{cases} H_{j,k}(\xi) & 0\le \xi \le 1\\ 0 & \text{otherwise}, \end{cases}$$
and
$$\widehat{g_{-,j,k}}(\xi) = \begin{cases} H_{j,k}(-\xi) & -1\le \xi \le 0\\ 0 & \text{otherwise}. \end{cases}$$
So, $\operatorname{supp} \widehat{g_{+,j,k}} \subset [0,1]$ and $\operatorname{supp} \widehat{g_{-,j,k}} \subset [-1, 0]$. Since $G_{\pm, j, k}(t) = g_{\pm, j, k}(t) e^{\pm 2\pi i\lambda_k t}$, we have $\widehat{G_{\pm, j, k}}(\xi) = \widehat{g_{\pm, j, k}}(\xi \mp \lambda_k)$, giving $\operatorname{supp} \widehat{G_{+, j, k}} \subset [\lambda_k, \lambda_k + 1]$ and $\operatorname{supp} \widehat{G_{-, j, k}} \subset [-(\lambda_k +1 ), -\lambda_k]$.

As in the proof of Theorem \ref{thm1}, we first cut out $[0, \ep)$. We have
\begin{align*}
    \|f\|_{L^2_\alpha([0, \ep)])}^2 &= c_\alpha \int_0^\ep |f(t)|^2\, t^{2\alpha + 1}\, dt\\
    &\le c_\alpha^3 \int_0^\ep \left( \int_0^\infty |\mathcal{F}_\alpha(f)(y)| |j_\alpha(2\pi ty)|\, y^{2\alpha + 1}\, dy \right)^2\, t^{2\alpha + 1}\, dt\\
    &\lesssim \int_0^\ep \left( \int_0^\infty |\mathcal{F}_\alpha(f)(y)|\, y^{\alpha + 1/2} \, dy \right)^2 \, dt\\
    &\lesssim \ep N\, \|\F_\alpha f\|^2_{L^2_\alpha} \lesssim \ep \, \|f\|^2_{L^2_\alpha},
\end{align*}
where (\ref{bessel-function-estimate}) is used in the third line.

Next, for $[\ep,\infty)$, we mimic the Taylor approximation idea from \cite{k1}. Write $[\ep, \infty) = \bigcup_{n=0}^\infty [\ep + n, \ep + n + 1] =: \bigcup_{n=0}^\infty A_n$, and set $\widetilde{A_n} := [(\ep + n)^2, (\ep + n + 1)^2]$. The form of the approximation differs depending on whether $k = 1$ or $k \ge 2$, due to the form of the Bernstein inequality that we subsequently use.

For $k = 1$, define $\Phi(u) := F_1(\sqrt{u})$ and let $r_{1,n}$ be the $M-1$-degree Taylor polynomial of $\Phi$, centered at $u = (\ep + n)^2$. Let $p_{1,n}(t) := r_{1,n}(t^2)$ for $t \in A_n$. 

When $k \ge 2$, let $w_j(t) := (2\pi t)^{-j-1}$, and $h_{\pm, j,k}(t) := w_j(t) g_{\pm, j,k}(t)$. Let $p_{j,k,n}(t)$ be the $M-1$-degree Taylor polynomial of $g_{\pm,j,k}(t)$, centered at $t = \ep + n$, i.e., the left-endpoint of $A_n$.

Define $$r_n(t) := p_{1,n}(t) + \sum_{\pm} \sum_{k=2}^N \sum_{j=m}^{2m} c_{\pm, j}\, w_j(t)\,  p_{j,k,n}(t)\, e^{\pm 2\pi i\lambda_k t},$$
and 
$$T_n(t) :=  \underbrace{F_1(t) - p_{1,n}(t)}_{T_n^{(1)}(t)} + T_n^{(2)}(t),$$
where
$$T_n^{(2)}(t) := \sum_{\pm} \sum_{k=2}^N \sum_{j=m}^{2m} c_{\pm, j}\, w_j(t)\left(g_{\pm, j,k}(t)  - p_{j,k,n}(t) \right)\, e^{\pm 2\pi i\lambda_k t}.$$

We can control $r_n$ on $A_n$ using Nazarov's Turan inequality, i.e., Lemma \ref{nazarov-turan-kovrizhkin}. First, define $\widetilde{r_n}(t) := (2\pi t)^{2m+1}\, r_n(t)$, so
$$\widetilde{r_n}(t) = (2\pi t)^{2m+1}\,p_{1,n}(t) + \sum_{\pm} \sum_{k=2}^N \sum_{j=m}^{2m} c_{\pm, j}\,(2\pi t)^{2m-j}\,  p_{j,k,n}(t)\, e^{\pm 2\pi i\lambda_k t},$$
is a suitable candidate for Lemma \ref{nazarov-turan-kovrizhkin}. We get
\begin{equation}
\label{remez-1}
    \|\widetilde{r_n}\|_{L^2(A_n)} \le \left(\frac{C|A_n|}{|E\cap A_n|} \right)^{(2N-1)(2M + 2m - 1) - \frac{1}{2}}\, \|\widetilde{r_n}\|_{L^2(E\cap A_n)},
\end{equation}
as there are $2(N-1) + 1 = 2N-1$ polynomials, and the degree of each polynomial is bounded by $\max\{M + m-1, 2M-2 + 2m+1\} = 2M + 2m - 1$. Since the weight $(2\pi t)^{2m+1}$ is almost constant on $A_n$, (\ref{remez-1}) can be converted to an  estimate for $r_n$. In particular,
\begin{equation}
\label{remez-2}
    \|{r_n}\|_{L^2(A_n)} \le 2^{2m+1}\, \left(\frac{C|A_n|}{|E\cap A_n|} \right)^{(2N-1)(2M + 2m - 1) - \frac{1}{2}}\, \|{r_n}\|_{L^2(E\cap A_n)},
\end{equation}
for all $n\ge 1$, as 
$$\frac{\max_{A_n}\, (2\pi t)^{2m+1}}{\min_{A_n} (2\pi t)^{2m+1}}  = \left(1 + \frac{1}{\ep + n} \right)^{2m+1} \le 2^{2m+1},$$
for all $n \ge 1$. If $n = 0$, we have
\begin{equation}
\label{remez-3}
    \|{r_0}\|_{L^2(J_0)} \le \left(1 + \frac{1}{\ep} \right)^{2m+1}\, \left(\frac{C|J_0|}{|E\cap J_0|} \right)^{(2N-1)(2M + 2m - 1) - \frac{1}{2}}\, \|{r_0}\|_{L^2(E\cap J_0)}.
\end{equation}
By Proposition \ref{relative-density-equivalence}, there exists $\widetilde{\gamma}$ such that $|E \cap A_n| \ge \widetilde{\gamma}\, |A_n|$ for each $n \ge 0$. Using this relative density condition, and summing up, we obtain
$$\sum_{n=0}^\infty \|r_n\|^2_{L^2(A_n)} \le (1 + \ep^{-1})^{4m+2}\left(C\,\widetilde\gamma^{-1} \right)^{(4N-2)(2M + 2m - 1) - 1} \sum_{n=0}^\infty  \|{r_n}\|_{L^2(E\cap A_n)}^2,$$
($\ep < 1$ ensures $\max \{2^{4m+2}, (1 + \ep^{-1})^{4m+2}\} = (1 + \ep^{-1})^{4m+2}$). Similarly, we can lift this estimate to $L^2_\alpha$ spaces, with a cost of a $(1 + \ep^{-1})^{2m+2}$ factor, i.e., 
$$\sum_{n=0}^\infty \|r_n\|^2_{L^2_\alpha(A_n)} \le (1 + \ep^{-1})^{6m+4}\left(C\,\widetilde\gamma^{-1} \right)^{(4N-2)(2M + 2m - 1) - 1} \sum_{n=0}^\infty  \|{r_n}\|_{L^2_\alpha(E\cap A_n)}^2$$

Now, we study the remainder terms. We analyze the low-frequency block first, i.e., $F_1$. 

Observe that 
\begin{align*}
    \|T_n^{(1)}\|^2_{L^2_\alpha(A_n)} \approx \|\Phi - r_{1,n}\|_{L^2(\widetilde{A_n}; \,u^\alpha du)}^2
\end{align*}
since the substitution  $u = t^2$ leads to $2t^{2\alpha + 1}\, dt = u^\alpha\, du$, and so
\begin{align*}
    \|T_n^{(1)}\|^2_{L^2_\alpha(A_n)} \approx \int_{A_n} |F_1(t) - p_{1,n}(t)|^2\, t^{2\alpha + 1}\, dt \approx \int_{\widetilde{A_n}} |\Phi(u) - r_{1,n}(u)|^2\, u^{\alpha}\, du. 
\end{align*} 

Using Taylor's remainder theorem,
$$\Phi(u) - r_{1,n}(u) = \frac{1}{(M-1)!} \int_{(\ep + n)^2}^u (u-s)^{M-1}\, \Phi^{(M)}(s)\, ds,$$
for every $u \in \widetilde{A_n}$.

In preparation for Schur's test, we define an operator $S_n: L^2(\widetilde{A_n}; s^{\alpha+ M} ds) \to L^2(\widetilde{A_n}; u^\alpha\, du)$ between weighted spaces by
$$(S_n h)(u) = \frac{1}{(M-1)!} \int_{\widetilde{A_n}} (u-s)_+^{M-1}\, h(s)\, ds,$$
where $x_+ := \max\{x,0\}$. We introduce two canonical isometries $\mathcal{U}, \mathcal{V}$ to define another operator $\widetilde{S_n} := \mathcal{V}\inv S_n\, \mathcal{U}\inv$ between unweighted $L^2$ spaces, which is our candidate for Schur's test. Let 
$\mathcal{U}: L^2(\widetilde{A_n}; s^{\alpha + M} ds) \to L^2(\widetilde{A_n}; ds)$ be given by $$(\mathcal{U}h)(s) := s^{\frac{\alpha + M}{2}}\, h(s),$$
and $\mathcal{V}: L^2(\widetilde{A_n}; du) \to L^2(\widetilde{A_n}; u^\alpha du)$ be given by
$$(\mathcal{V}h)(u) := u^{-\alpha/2}\, h(u).$$
Then,
$$(\widetilde{S_n} h)(u) = \int_{\widetilde{A_n}} K_n(u,s)\, h(s)\, ds,$$
where 
$$K_n(u,s) := \frac{1}{(M-1)!}\, (u-s)_+^{M-1}\, u^{\alpha/2}\, s^{-\frac{(\alpha + M)}{2}}.$$
Therefore,
$$\|\Phi - r_{1,n}\|_{L^2(\widetilde{A_n}; \,u^\alpha du)}^2 = \|\widetilde{S_n}(s^{\frac{\alpha + M}{2}}\, \Phi^{(M)}(s))\|_{L^2(\widetilde{A_n};\, du)}^2 \le \|\widetilde{S_n}\|^2 \|\Phi^{(M)}\|_{L^2(\widetilde{A_n}; \, s^{\alpha + M} ds)}^2.$$
We obtain bounds for $\|\widetilde{S_n}\|$ using Schur's test on the kernel $K_n(u,s)$. Let $b_n := \ep + n$. Then, for $u,s \in \widetilde{A_n}$, we have $s \ge b_n^2$, $u \le (b_n + 1)^2$, and $|u-s| \le |\widetilde{A_n}| = 2b_n + 1$. This gives
$$u^{\alpha/2}\, s^{-\frac{(\alpha + M)}{2}} = \left(\frac{u}{s} \right)^{\alpha/2} s^{-M/2} \le \left(1 + \frac{1}{b_n}\right)^\alpha\, b_n^{-M}.$$
Hence, $$K_n(u,s) \le  \frac{1}{(M-1)!}\, (u-s)_+^{M-1} \left(1 + \frac{1}{b_n}\right)^\alpha\, b_n^{-M},$$ and therefore,
$$\int_{\widetilde{A_n}} K_n(u,s)\, ds \le \frac{1}{(M-1)!}\, \left(1 + \frac{1}{b_n}\right)^\alpha\, b_n^{-M} \int_0^{2b_n+1} x^{M-1}\, dx = \frac{1}{M!}\, \left(1 + \frac{1}{b_n}\right)^\alpha \left(2 + \frac{1}{b_n} \right)^M.$$
It follows that
    $$\sup_{u\in \widetilde{A_n}}\int_{\widetilde{A_n}} K_n(u,s)\, ds \le \frac{1}{M!} \left(1 + \frac{1}{b_n}\right)^\alpha\, \left(2 + \frac{1}{b_n}\right)^M,$$
and a similar calculation gives
     $$\sup_{s\in \widetilde{A_n}}\int_{\widetilde{A_n}} K_n(u,s)\, du \le \frac{1}{M!} \left(1 + \frac{1}{b_n}\right)^\alpha\, \left(2 + \frac{1}{b_n}\right)^M.$$
By Schur's test,
$$\|\widetilde{S_n}\|_{L^2\to L^2} \le \frac{1}{M!} \left(1 + \frac{1}{b_n}\right)^\alpha\, \left(2 + \frac{1}{b_n}\right)^M \le \frac{2^\alpha 3^M}{M!},$$
for all $n\ge 1$, and
$$\|\widetilde{S_0}\|_{L^2\to L^2} \le \frac{(1 + \ep^{-1})^\alpha (2 + \ep^{-1})^M}{M!},$$
if $n = 0$.

Next, using Lemma \ref{lemma-bernstein}, we have
\begin{equation}
    \label{lemma-bernstein-2}
    \|\Phi^{(M)}\|_{L^2(\R^+; \, s^{\alpha + M} ds)}^2 \lesssim (2\pi)^{2M} \int_0^\infty |\Phi(s)|^2\, s^\alpha\, ds \lesssim 2^{6M}\,\|F_1\|^2_{L^2_\alpha},
\end{equation}
since $\operatorname{supp} \F_\alpha(F_1) \subset [\lambda_1, \lambda_1 + 1] \subset [0,2]$. Substituting these estimates into
$$\|T_n^{(1)}\|^2_{L^2_\alpha(A_n)}  \lesssim \|\widetilde{S_n}\|^2 \|\Phi^{(M)}\|_{L^2(\widetilde{A_n}; \, s^{\alpha + M} ds)}^2,$$
summing up, and using $\ep < 1$, we obtain
$$\sum_{n=0}^\infty \|T_n^{(1)}\|^2_{L^2_\alpha(A_n)} \lesssim \frac{(1 + \ep^{-1})^{2\alpha} (2 + \ep^{-1})^{2M}}{(M!)^2} \sum_{n=0}^\infty \|\Phi^{(M)}\|_{L^2(\widetilde{A_n}; \, s^{\alpha + M} ds)}^2.$$
This leads to the following remainder estimate for the low-frequency block:
$$\sum_{n=0}^\infty \|T_n^{(1)}\|^2_{L^2_\alpha(A_n)} \lesssim \frac{2^{6M}(1 + \ep^{-1})^{2\alpha} (2 + \ep^{-1})^{2M}}{(M!)^2}\|F_1\|^2_{L^2_\alpha}.$$

Next, we control the remainder term for $k \ge 2$, i.e., we treat the high frequency block $T_n^{(2)}(t)$. Note that
$$|T_n^{(2)}(t)| \lesssim \sum_{\pm} \sum_{k=2}^N \sum_{j=m}^{2m} |w_j(t)| \left|g_{\pm, j,k}(t)  - p_{j,k,n}(t) \right|,$$
which gives
$$\|T_n^{(2)}\|^2_{L^2_\alpha(A_n)} \lesssim \sum_{\pm} \sum_{k=2}^N \sum_{j=m}^{2m} \|w_j \left(g_{\pm, j,k} - p_{j,k,n} \right)\|^2_{L^2_\alpha(A_n)}.$$

By Taylor's remainder theorem, 
$$g_{\pm, j,k}(t)  - p_{j,k,n}(t) = \frac{1}{(M-1)!} \int_{\ep + n}^{t} g^{(M)}_{\pm,j,k}(s) (t-s)^{M-1}\, ds.$$

Define $(S_{j,n} h)(t) := \int_{A_n} K_{j,n}(t,s)\, h(s)\, ds,$
where 
$$K_{j,n}(t,s) = \frac{1}{(M-1)!} (t-s)^{M-1}_+\, w_j(t)\, t^{\alpha + \frac{1}{2}}.$$
Clearly, we want bounds for $\|S_{j,n}\|$, as 
$$\|w_j \left(g_{\pm, j,k} - p_{j,k,n} \right)\|_{L^2_\alpha(A_n)} \approx \|S_{j,n}(g_{\pm,j,k}^{(M)})\|_{L^2(A_n;\, dt)} \le \|S_{j,n}\| \|g_{\pm,j,k}^{(M)}\|_{L^2(A_n;\, ds)}.$$
We have 
$$\sup_{t\in A_n}\int_{A_n} K_{j,n}(t,s)\, ds \le \frac{(2\pi)^{-j-1}}{M!}\, \sup_{t\in A_n} t^{m-j},$$
and 
$$\sup_{s\in A_n}\int_{A_n} K_{j,n}(t,s)\, dt \le \frac{(2\pi)^{-j-1}}{M!}\, \sup_{t\in A_n} t^{m-j},$$
giving 
$$\|S_{j,n}\|_{L^2\to L^2} \le \frac{(2\pi)^{-j-1}}{M!}\, \sup_{t\in A_n} t^{m-j},$$
and therefore
$$\|S_{j,n}\|_{L^2\to L^2} \le \begin{cases}
    \frac{(2\pi)^{-j-1}}{M!} & \text{if }n\ge 1,\\
    \frac{(2\pi)^{-j-1}\, \ep^{m-j}}{M!} & \text{if }n=0.
\end{cases}$$

As earlier, summing up, and using $\ep < 1$ with $(2\pi)^{-2j-2} \le 1$, 
$$\sum_{n=0}^\infty \|w_j \left(g_{\pm, j,k} - p_{j,k,n} \right)\|_{L^2_\alpha(A_n)}^2 \lesssim \frac{\ep^{-2m}}{(M!)^2} \, \|g_{\pm,j,k}^{(M)}\|_{L^2(\R)}^2.$$
As $\ep < 1$ and $j \le 2m$, we have $\ep^{2m-2j} \le \ep^{-2m}$. Finally, as $\operatorname{supp} g_{\pm,j,k} \subset [-1,1]$, 
$$\|g_{\pm,j,k}^{(M)}\|_{L^2(\R)}^2 = \|\widehat{g_{\pm,j,k}^{(M)}}\|_{L^2(\R)}^2 \le (2\pi)^{2M} \|\widehat{g_{\pm,j,k}}\|^2_{L^2(\R)} = (2\pi)^{2M} \|g_{\pm,j,k}\|^2_{L^2(\R)}.$$
Summing over the $(\pm, j, k)$ indices, we obtain the following remainder estimate for the high-frequency block:
$$\sum_{n=0}^\infty \|T_n^{(2)}\|^2_{L^2_\alpha(A_n)} \lesssim \frac{(2\pi)^{2M}\, \ep^{-2m}}{(M!)^2} \sum_{\pm} \sum_{k=2}^N \sum_{j=m}^{2m} \|g_{\pm,j,k}\|^2_{L^2(\R)}.$$
As $\lambda_k \ge 1$ for $k \ge 2$, we have
\begin{align*}
    \|\widehat{g_{\pm,j,k}}\|_{L^2(\R)}^2 = \int_0^1 |H_{j,k}(\xi)|^2\, d\xi &= \int_0^1 |\F_\alpha(f)(\lambda_k + s)|^2 \, (\lambda_k + s)^{4m+2-2j}\, ds\\
    &\le \int_{\lambda_k}^{\lambda_k + 1} |\F_\alpha (f)(s)|^2\, s^{2m+2}\, ds = \int_{\lambda_k}^{\lambda_k + 1} |\F_\alpha (f)(s)|^2\, s^{2\alpha + 1}\, ds,
\end{align*}
and therefore the high-frequency estimate can be simplified to 
$$\sum_{n=0}^\infty \|T_n^{(2)}\|^2_{L^2_\alpha(A_n)} \lesssim \frac{(2\pi)^{2M}\, \ep^{-2m}}{(M!)^2} \sum_{k=2}^N \|F_k\|^2_{L^2_\alpha}.$$

Finally, we combine all estimates to show $\|f\|_{L^2_\alpha(\R^+)} \lesssim \|f\|_{L^2_\alpha(E)}$. For convenience, define the following quantities:
$$C_{\text{NT}} := (1 + \ep^{-1})^{6m+4}\left(C\,\widetilde\gamma^{-1} \right)^{(4N-2)(2M + 2m - 1) - 1},$$
$$C_{\text{low}} := \frac{2^{6M}(1 + \ep^{-1})^{2\alpha} (2 + \ep^{-1})^{2M}}{(M!)^2},$$
$$C_{\text{high}} := \frac{(2\pi)^{2M}\, \ep^{-2m}}{(M!)^2}.$$
Observe that $C_{\text{NT}}\,(C_{\text{low}} + C_{\text{high}}) \to 0$ as $M\to\infty$. Finally,
\begin{align*}
\|f\|_{L^2_\alpha(\R^+)}^2 &\lesssim \ep\, \|f\|_{L^2_\alpha(\R^+)}^2 + \|f\|^2_{L^2_\alpha([\ep, \infty])}\\
&\lesssim \ep\, \|f\|_{L^2_\alpha(\R^+)}^2 + \sum_{n=0}^\infty \|r_n\|^2_{L^2_\alpha(A_n)} + \sum_{n=0}^\infty \|T_n^{(1)}\|^2_{L^2_\alpha(A_n)} + \sum_{n=0}^\infty \|T_n^{(2)}\|^2_{L^2_\alpha(A_n)}\\
&\lesssim \ep\, \|f\|_{L^2_\alpha(\R^+)}^2 + C_{\text{NT}} \sum_{n=0}^\infty \|r_n\|^2_{L^2_\alpha(E\cap A_n)} + (C_{\text{low}} + C_{\text{high}}) \|f\|^2_{L^2_\alpha(\R^+)}\\
&\lesssim \ep\, \|f\|_{L^2_\alpha(\R^+)}^2 +  C_{\text{NT}}\, \|f\|^2_{L^2_\alpha(E)} + (C_{\text{NT}} + 1)(C_{\text{low}} + C_{\text{high}}) \|f\|^2_{L^2_\alpha(\R^+)}.
\end{align*}
Let $\beta := \beta(\alpha, N, \gamma) > 0$ be such that
$$\|f\|_{L^2_\alpha(\R^+)}^2  \le \ep\beta\, \|f\|_{L^2_\alpha(\R^+)}^2 +  \beta C_{\text{NT}}\, \|f\|^2_{L^2_\alpha(E)} + \beta (C_{\text{NT}} + 1)(C_{\text{low}} + C_{\text{high}}) \|f\|^2_{L^2_\alpha(\R^+)}.$$
Now, choose $\ep$ small enough so that $\ep \beta < \frac{1}{4}$. Once $\ep$ is fixed, we choose $M$ large enough to ensure $\beta (C_{\text{NT}} + 1)(C_{\text{low}} + C_{\text{high}}) < \frac{1}{4}$. In conclusion, we have 
$$\|f\|_{L^2_\alpha(\R^+)} \lesssim_{\alpha, N, \gamma} \|f\|_{L^2_\alpha(E)}.$$

\end{proof}

\section{An Extension of Theorem \ref{thm-damped}}

Theorem \ref{thm-damped} can be extended to functions that can be written as a product of a radial function and a spherical harmonic. The following calculation obtains the estimate (\ref{pls-independent}), using the estimate for radial functions (Theorem \ref{thm1}). 

    Suppose $f(x) = F(|x|) \,Y_k\left(\frac{x}{|x|} \right)$. By the Bochner identity,
\begin{equation}
\label{bochner-1}
    \widehat{f}(y) = \widehat{FY_k}(y) \approx Y_k\bigg(\frac{y}{|y|}\bigg)\, |y|^k \int_0^\infty F(r)\, j_{n/2 + k - 1}(2\pi r|y|)\, r^{n+k-1}\, dr.
\end{equation}
The right-side can be expressed as the Fourier-Bessel transform of $g(r) = r^{-k}\, F(r)$, of order $\alpha = n/2 + k -1$. If $\alpha = n/2 + k - 1$, then $2\alpha + 1 = n + 2k - 1$. Thus,
\begin{equation}
\label{bochner-2}
    \widehat{f}(y) \approx  Y_k\bigg(\frac{y}{|y|}\bigg)\, |y|^k \int_0^\infty g(r)\, j_{\alpha}(2\pi r|y|)\, r^{2\alpha +1}\, dr =  Y_k\bigg(\frac{y}{|y|}\bigg)\, |y|^k\, \F_\alpha(g)(|y|).
\end{equation}

Now, suppose $\operatorname{supp} \widehat{f} \subset \{\xi \in R^n: R \le |\xi| \le R+1\}$, and $E$ satisfies $\mu_\alpha$-relative density in each radial direction. In other words, if $E_\omega$ is the slice of $E$ along $\omega\in S^{n-1}$, then, omitting pushforwards, we ask $\mu_\alpha (E_\omega \cap [R\omega,(R+1)\omega]) \gtrsim \mu_\alpha([R\omega, (R+1)\omega])$ for all $R \ge 0$. By \Cref{bochner-2}, we have $\operatorname{supp} \F_\alpha(g) \subset [R,R+1]$. By Theorem \ref{thm1}, we get 
\begin{equation}
    \label{bochner-3}
    \|g\|_{L^2_\alpha(\R^+)} \lesssim \|g\|_{L^2_\alpha(E_\omega)},
\end{equation}
for every $\omega \in S^{n-1},$ where $\alpha = n/2 + k - 1$. Finally,
\begin{align*}
    \|f\|_{L^2(\R^n)}^2 &= \int_{0}^\infty \int_{S^{n-1}} |F(r)|^2\, |Y_k(\omega)|^2 \, r^{n-1}\, d\sigma(\omega) \, dr\\ 
    &= \int_{0}^\infty \int_{S^{n-1}} |g(r)|^2\, |Y_k(\omega)|^2 \, r^{n+2k-1}\, d\sigma(\omega) \, dr\\
    &= \|Y_k\|^2_{L^2(S^{n-1})}\, \|g\|_{L^2_\alpha(\R^+)}^2,
\end{align*}
and similarly, 
\begin{align*}
    \|f\|^2_{L^2(E)} &=  \int_{0}^\infty \int_{S^{n-1}} \mathbf{1}_{E}(r\omega)\, |g(r)|^2\, |Y_k(\omega)|^2 \, r^{n+2k-1}\, d\sigma(\omega) \, dr\\
    &= \int_{S^{n-1}} |Y_k(\omega)|^2 \int_{E_\omega} |g(r)|^2\, r^{n+2k-1}\, dr\, d\sigma(\omega)\\
    &= \int_{S^{n-1}} |Y_k(\omega)|^2\, \|g\|^{2}_{L^2_\alpha(E_\omega)}\, d\sigma(\omega).
\end{align*}
Starting with \Cref{bochner-3}, multiplying both sides by $|Y_k(\omega)|^2$ and integrating over $(S^{n-1}, d\sigma)$, we get $\|f\|_{L^2(\R^n)} \lesssim \|f\|_{L^2(E)}$ as desired.

\section*{Acknowledgments}
The authors thank Sergey Tikhonov for suggesting the extension of Theorem \ref{thm-damped} as presented in the final section of the paper.  We also thank Alexander Volberg for suggesting to consider the radial case of Question \ref{gccques}, which initiated the project.

Research supported in part by NSF grants DMS-2453251 and DMS-2049477.  This research was undertaken in part while B.J. was a Simons’ Fellow.

\nocite{*}
\bibliographystyle{plain}\bibliography{pls}

\end{document}